\documentclass{amsart}
\usepackage{mathrsfs,amsmath,amsfonts,enumerate,booktabs,siunitx,float,graphicx,color}

\newcommand{\threebar}[1]{{\left\vert\kern-0.25ex\left\vert\kern-0.25ex\left\vert #1 
    \right\vert\kern-0.25ex\right\vert\kern-0.25ex\right\vert}}
\newcommand{\A}{\mathsf{A}}
\newcommand{\B}{\mathsf{B}}
\newcommand{\N}{\mathbb{N}}

\newtheorem{question}{Question}
\newtheorem{lemma}{Lemma}
\newtheorem{proposition}[lemma]{Proposition}

\newtheorem{corollary}[lemma]{Corollary}

\newtheorem{theorem}{Theorem}

\newcommand{\R}{\mathbb{R}}

\begin{document}
\title[Barabanov norms which are not strictly convex]{An irreducible linear switching system whose unique Barabanov norm is not strictly convex}

\author{Ian D. Morris}

\maketitle

\begin{abstract}
We construct a marginally stable linear switching system in continuous time, in four dimensions and with three switching states, which is exponentially stable with respect to constant switching laws and which has a unique Barabanov norm, but such that the Barabanov norm fails to be strictly convex. This resolves a question of Y. Chitour, M. Gaye and P. Mason. \end{abstract}





\section{Introduction}

\subsection{Stability of linear switching systems}
For every $d \geq 1$ let $M_d(\R)$ denote the vector space of all $d \times d$ real matrices. If $\A \subset  M_d(\R)$ is a compact nonempty set, we will say that a trajectory of the \emph{linear switching system defined by $\A$} is any absolutely continuous function $x \colon [0,\infty) \to \R^d$ which solves a differential equation of the form
\begin{equation}\label{eq:that-equation}x'(t)=A(t)x(t)\end{equation}
in the sense of Carath\'eodory, for some Lebesgue measurable function $A \colon [0,\infty) \to\A$ which we refer to as a \emph{switching law}. This article is concerned with a question arising from the stability classification of linear switching systems. 

The stability regimes of a linear switching system are typically classified into the following categories (see e.g. \cite{ChMaSi22,Pr16,PrJu15,Su08}):  the linear switching system defined by $\A$ is called \emph{exponentially stable} if there exist $C, \kappa>0$ such that every trajectory $x(t)$ satisfies $\|x(t)\| \leq Ce^{-\kappa t}\|x(0)\|$ for all $t \geq 0$; \emph{exponentially unstable} if there exists a trajectory such that $\|x(t)\| \geq Ce^{\kappa t}\|x(0)\|>0$ for all $t \geq 0$, for some $C,\kappa>0$;  \emph{marginally stable} if neither of the two previous cases applies, but there exists $C>0$ such that every trajectory satisfies $\|x(t)\| \leq C\|x(0)\|$ for all $t \geq 0$; and \emph{marginally unstable} if none of the preceding conditions applies, in which case one may show that there necessarily exists an unbounded trajectory. If $\A$ is a singleton set with sole element $A$ then it is elementary that the four regimes may be distinguished via the spectrum of $A$ as follows: if $\lambda(A)$ denotes the maximum of the real parts of the eigenvalues of $A$ then the system is exponentially stable if $\lambda(A)<0$ and exponentially unstable if $\lambda(A)>0$;  if $\lambda(A)=0$ then it is marginally stable if every eigenvalue with zero real part is semisimple, and marginally unstable otherwise.

In the case where $\A$ is not a singleton set the situation is considerably more complicated, and the search for verifiable criteria with which to determine the stability of the linear switching system defined by a set $\A$ has given rise to a substantial body of research (see for example \cite{AgLi01,BaBoMa09,Bo02,FaMaCh09,GuShMa07,MaLa03,PyRa96,ShWiMaWuKi07}). By analogy with the case where $\A$ is a singleton set one may define the \emph{topological top Lyapunov exponent} of $\A$ to be the quantity
\[\Lambda(\A):=\lim_{t \to \infty} \sup_{x} \frac{1}{t}\log \left(\frac{\|x(t)\|}{\|x(0)\|}\right)\]
where the supremum is over all trajectories of the linear switching system defined by $\A$ such that $x(0)$ is nonzero. The existence of this limit is guaranteed by a subadditivity argument, and if $\A$ is a singleton set $\{A\}$ then one simply has $\lambda(A)=\Lambda(\A)$. It is not difficult to show that exponential stability occurs precisely when $\Lambda(\A)<0$ and exponential instability occurs precisely when $\Lambda(\A)>0$. One may also show that the quantity $\Lambda(\A)$ -- and indeed the classification of $\A$ among the four stability regimes -- is unaffected if $\A$ is replaced by its convex hull, and for this reason it is common to make the assumption that $\A$ is convex (see \cite{ShCaCu00}). On the other hand the accurate computation of the quantity $\Lambda(\A)$ remains an unresolved problem, with concise descriptions of $\Lambda(\A)$ being available only in very simple cases (see e.g. \cite{AgLi01,BaBoMa09,Bo02}). In this article the set $\A$ will usually be the convex hull of a finite set of matrices $\{A_1,\ldots,A_N\}$, and we will sometimes refer to the $N$ matrices $A_i$ as the \emph{switching states} of the linear switching system defined by $\A$.

\subsection{Extremal norms and most unstable switching laws}
A major source of difficulty in computing the value of $\Lambda(\A)$ is simply that the class of all possible switching laws is extremely large, and consequently the set of all trajectories which must be considered is, at least \emph{a priori}, also extremely large. In response to this problem a natural strategy is to try to reduce the breadth of the class of switching laws which needs to be considered, hopefully by identifying some small class of ``worst-case'' or ``most-unstable'' switching laws with the property that the system is (for example) exponentially stable if and only if every trajectory $x(t)$ of \eqref{eq:that-equation} in which $A(t)$ is a worst-case switching law converges to the origin. (This strategy is applied for example in \cite{Bo02,MaLa03,PyRa96,Ra96}, and also motivates works such as \cite{GuShMa07}.) A precise mathematical meaning may be given to the concept of ``most unstable switching law'' via the notions of \emph{extremal norm} and \emph{Barabanov norm}, as follows. A norm $\threebar{\cdot}$ on $\R^d$ is called \emph{extremal} for the (compact, nonempty) set $\A\subset M_d(\R)$ if the function $t\mapsto e^{-t\Lambda(\A)}\threebar{x(t)}$ is non-increasing along all trajectories $x$ of the linear switching system defined by $\A$. An extremal norm is called a \emph{Barabanov norm} for $\A$ if additionally for every $u \in \R^d$ there exists a trajectory $x$ of the linear switching system defined by $\A$ such that $x(0)=u$ and such that $e^{-t\Lambda(\A)}\threebar{x(t)}$ is constant with respect to $t \geq 0$. This allows one to define an \emph{extremal trajectory} for a given set $\A$ to be a trajectory $x$ such that $\threebar{x(t)} = e^{t\Lambda(\A)}\threebar{x(0)}>0$ for every $t \geq 0$ for some Barabanov norm $\threebar{\cdot}$ for $\A$, and a \emph{most-unstable switching law} to be a switching law $A \colon [0,\infty) \to \A$ such that $x'(t)=A(t)x(t)$ a.e. for some extremal trajectory $x$. If a given set $\A$  is replaced with the set $\hat{\A}:=\{A-\Lambda(\A)I\colon A \in \A\}$ then it is not difficult to see that $\Lambda(\hat\A)=0$, that the extremal norms and Barabanov norms of $\A$ are identical with those of $\hat{\A}$, and that $x \colon [0,\infty) \to \R^d$ is an extremal trajectory for $\hat\A$ if and only if $y(t):=e^{t\Lambda(\A)}x(t)$ is an extremal trajectory for $\A$. The study of extremal norms, Barabanov norms, extremal trajectories and most-unstable switching laws is therefore not substantially affected by the assumption that $\Lambda(\A)=0$ and we will frequently make this assumption in the sequel. Subject to this reduction it is clear that an extremal trajectory is precisely a trajectory which is confined to a level set of a Barabanov norm, and the problem of understanding most-unstable switching laws is thus inherently tied to that of understanding the geometry of Barabanov norms.

\subsection{Geometry of Barabanov norms} Let us say that a set $\A\subset M_d(\R)$ is \emph{reducible} if there exists a linear subspace $V \subset \R^d$ such that $AV \subseteq V$ for every $A \in \A$ and such that $0<\dim V <d$. Let us also say that $\A$ is \emph{irreducible} if it is not reducible. If $\A\subset  M_d(\R)$ is compact and irreducible then the existence of at least one Barabanov norm for $\A$ is guaranteed by a foundational result due to N.E. Barabanov \cite{Ba88b}. It is clear that if $\threebar{\cdot}$ is a Barabanov norm for $\A$ then so too is every positive scalar multiple of $\threebar{\cdot}$ and in this sense the Barabanov norm of a set $\A$ is never unique. However, we will say that $\A$ has a unique Barabanov norm up to scalar multiplication if the ratio of any two Barabanov norms for $\A$ is constant, and by mild abuse of language we will sometimes find it convenient to say that $\A$ has a unique Barabanov norm when this property holds for $\A$. Some sufficient conditions for the uniqueness of Barabanov norms in this sense have been given in \cite{ChGaMa15}, and some examples of non-uniqueness of Barabanov norms are also presented in that article.
  
In \cite{ChGaMa15}, Y. Chitour, M. Gaye and P. Mason considered the question of whether the unit ball of a Barabanov norm is necessarily strictly convex. Relative to this question they noted an example $\mathsf{M}\subset M_2(\R)$ with two switching states given by
\begin{equation}\label{eq:cgm}M_0:=\begin{pmatrix}0&0\\0&-1\end{pmatrix},\qquad M_1:=\begin{pmatrix}\alpha &3\\ -\frac{3}{5}& \frac{7}{10}\end{pmatrix}\end{equation}
where the real number $\alpha\simeq -0.88964\ldots$ is chosen in such a way that the curve $t \mapsto e^{tA_1}(-1,0)^T$ touches the vertical line $\{(1,y) \colon y\in \R\}$ tangentially for a unique positive $t$, and does not enter the open half-plane to the right of that line at any positive time $t$.
This system admits a unique Barabanov norm (see \cite{ChGaMa15}) and this norm is \emph{not} strictly convex. The unit ball of this norm is illustrated in Figure \ref{fi:one}. For this system there always exists a \emph{unique} extremal trajectory originating at each point in $\R^2\setminus \{0\}$. When the initial condition $x(0)$ lies in either of the two grey (shaded) sectors in Figure \ref{fi:one} this unique extremal trajectory has constant switching law $M_0$ and proceeds along a straight line towards the horizontal axis. When the initial condition instead lies in the white (unshaded) region the unique extremal trajectory follows a constant switching law corresponding to $M_1$ until it reaches one of the two grey (shaded) sectors. At this point the switching law changes to $M_0$ and the trajectory again converges to the horizontal axis along a straight vertical line.
\begin{figure}
\begin{center}
\includegraphics[scale=0.5]{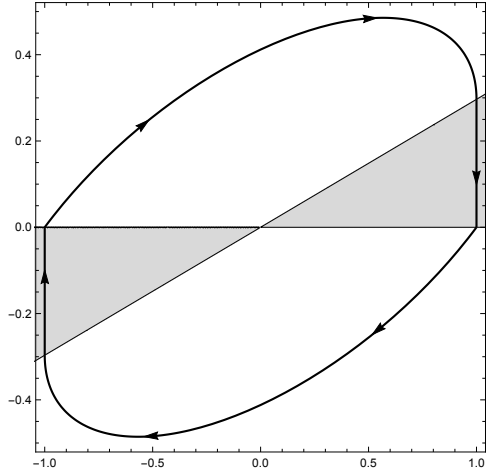}
\caption{The boundary of the unit ball of the Barabanov norm for the example \eqref{eq:cgm} defined by Chitour, Gaye and Mason. In the shaded region, the boundary of the unit ball is a straight vertical line.}\label{fi:one}
\end{center}
\end{figure}
Chitour, Gaye and Mason's example makes use of a zero eigenvalue in the matrix $M_0$ to confine certain trajectories to straight vertical lines, and this naturally raises the question of whether the existence of such a zero eigenvalue is a necessary condition for a system to admit a Barabanov norm which is not strictly convex. This question was posed as Open Problem 4 in the article \cite{ChGaMa15}. The purpose of this article is to answer this question negatively, as follows:
\begin{theorem}\label{th:main}
Define a real number $\lambda \in \R$ by
\[\lambda:=\Lambda\left(\left\{
\begin{pmatrix} 0&-2\\ \frac{1}{2}&0 \end{pmatrix}, \begin{pmatrix} 0 &-\frac{1}{2}\\ 2&0\end{pmatrix}\right\}\right)\]
and define four matrices by
\[A_0:=\begin{pmatrix}0&0\\0&-1\end{pmatrix},\qquad A_1:=\begin{pmatrix}-1&1\\ -1&-1\end{pmatrix},\]
\[B_0=\begin{pmatrix} -\lambda &-2\\ \frac{1}{2}&-\lambda \end{pmatrix},\qquad B_1=\begin{pmatrix} -\lambda &-\frac{1}{2}\\ 2&-\lambda\end{pmatrix}.\]
Now define three matrices $X_0, X_1, X_2 \in M_4(\R)$ by
\[X_0:=A_0 \otimes I + I\otimes B_0 =\begin{pmatrix} 
-\lambda&-2&0&0\\ 
\frac{1}{2}&-\lambda&0&0\\ 
0&0&-\lambda-1&-2\\ 
0&0&\frac{1}{2}&-\lambda-1 \end{pmatrix},\]
\[X_1:=A_0 \otimes I + I \otimes B_1=\begin{pmatrix} 
-\lambda&-\frac{1}{2}&0&0\\ 
2&-\lambda&0&0\\ 
0&0&-\lambda-1&-\frac{1}{2}\\ 
0&0&2&-\lambda-1 \end{pmatrix}, \]
\[X_2:=A_1 \otimes I=
\begin{pmatrix}
-1&0&1&0\\ 
0&-1&0&1\\ 
-1&0&-1&0\\
0&-1&0&-1 \end{pmatrix}
\]
and let $\mathsf{X}\subset M_4(\R)$ denote the convex hull of $\{X_0, X_1, X_2\}$. Then the linear switching system defined by $\mathsf{X}$ is irreducible and marginally stable, and every element of $\mathsf{X}$ is a Hurwitz matrix. There exists a unique Barbanov norm for $\mathsf{X}$ up to scalar multiplication, and this norm is not strictly convex.\end{theorem}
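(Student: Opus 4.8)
The plan is to identify the (unique) Barabanov norm of $\mathsf X$ explicitly as a tensor-product norm built from the Barabanov norms of two auxiliary planar systems, and then to read off every assertion of the theorem from properties of those two norms. Write $C_0:=\begin{pmatrix}0&-2\\ \frac12&0\end{pmatrix}$ and $C_1:=\begin{pmatrix}0&-\frac12\\ 2&0\end{pmatrix}$, so that $\lambda=\Lambda(\{C_0,C_1\})$ and $B_0=C_0-\lambda I$, $B_1=C_1-\lambda I$. First I would record that $\lambda>0$: since $C_0$ and $C_1$ each preserve a different (``crossed'') ellipse, the trajectory of $\{C_0,C_1\}$ which alternately applies $e^{(\pi/2)C_1}=C_1$ and $e^{(\pi/2)C_0}=C_0$ has its Euclidean norm multiplied by $4$ over each time interval of length $\pi$, whence $\lambda\ge\frac1\pi\log 4>0$. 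Consequently $\Lambda(\conv\{B_0,B_1\})=0$, and also $\Lambda(\conv\{A_0,A_1\})=0$ (along any trajectory of the latter $\frac{d}{dt}\|x\|^2\le0$, while $(1,0)$ is fixed by $A_0$). Both pairs are irreducible --- $C_0$ has no real eigenvector and $A_1$ has characteristic polynomial $(z+1)^2+1$, so in each case the two matrices have no common invariant line --- so by Barabanov's theorem each admits a Barabanov norm; call them $N$ (for $\conv\{B_0,B_1\}$, equivalently for $\{C_0,C_1\}$) and $N_1$ (for $\conv\{A_0,A_1\}$). The proposed Barabanov norm for $\mathsf X$ is $\threebar{\cdot}$, the projective tensor norm on $\R^4=\R^2\otimes\R^2$ associated with $N_1$ on the first factor and $N$ on the second.

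The ``soft'' assertions come first. A general element of $\mathsf X$ has the form $\bigl((1-r)A_0+rA_1\bigr)\otimes I+I\otimes(sB_0+tB_1)$ with $r,s,t\ge 0$, $r+s+t=1$, so its eigenvalues are the sums of an eigenvalue of $M_r:=(1-r)A_0+rA_1$ and one of $sB_0+tB_1=(sC_0+tC_1)-(1-r)\lambda I$; since $sC_0+tC_1$ has trace $0$ and positive determinant its spectrum is purely imaginary, so the latter contributes eigenvalues of real part $-(1-r)\lambda\le0$, while $M_r$ has trace $-1-r<0$ and determinant $r(1+r)\ge0$ and hence eigenvalues of real part $\le 0$, with equality only for $r=0$ (where they are $0$ and $-1$). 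As $\lambda>0$ it follows that every eigenvalue of every element of $\mathsf X$ has strictly negative real part, giving the Hurwitz claim. For irreducibility, any $\mathsf X$-invariant subspace $W$ is invariant under $X_2=A_1\otimes I$; since $(A_1+I)\otimes I$ squares to $-I$ it defines a complex structure on $\R^4$ whose complex subspaces are exactly the $X_2$-invariant real ones, so $\dim W\in\{0,2,4\}$, and a two-dimensional $W$ is a complex line. Being also invariant under the commuting diagonalisable operator $X_0-X_1=I\otimes(B_0-B_1)$, such a $W$ must be an eigenline of $I\otimes(B_0-B_1)$, i.e. $W=\R^2\otimes\R v$ with $(C_0-C_1)v\in\R v$; but then invariance of $W$ under $X_0=A_0\otimes I+I\otimes B_0$ forces $B_0 v\in\R v$, impossible since $C_0$ has no real eigenvector. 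Extremality of $\threebar{\cdot}$ is immediate from $\|S\otimes T\|_{\mathrm{op}}=\|S\|_{\mathrm{op}}\|T\|_{\mathrm{op}}$ for the projective tensor norm together with the extremality of $N_1$ and $N$ (which give $\|e^{tA_i}\|_{\mathrm{op}}\le1$, $\|e^{tB_j}\|_{\mathrm{op}}\le1$, and $\|I\|_{\mathrm{op}}=1$); hence $\Lambda(\mathsf X)\le 0$ and every trajectory is bounded, so $\mathsf X$ is marginally stable and $\Lambda(\mathsf X)=0$, the reverse inequality being witnessed by the trajectory $e_1\otimes q(t)$ where $q(t)$ is a constant-$N$ trajectory of $\{B_0,B_1\}$.

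It remains to verify that $\threebar{\cdot}$ is a Barabanov norm, that it is the only one up to scaling, and that it is not strictly convex. For the Barabanov equation at a rank-one tensor $p\otimes q$ I would use that $\phi\otimes\psi\in\partial\threebar{p\otimes q}$ whenever $\phi\in\partial N_1(p)$, $\psi\in\partial N(q)$, together with $X_0(p\otimes q)=(A_0 p)\otimes q+p\otimes(B_0 q)$ and $X_2(p\otimes q)=(A_1 p)\otimes q$, to see that $\sup_{A\in\mathsf X}D\threebar{p\otimes q}[A(p\otimes q)]$ is at least $N(q)\cdot\max\bigl(D N_1(p)[A_1 p],\,D N_1(p)[A_0 p]+\max_{j}D N(q)[B_j q]\bigr)$, which vanishes by the Barabanov equations for $N$ (inner maximum $=0$) and for $N_1$; together with extremality this gives the value $0$. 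The principal obstacle is to extend this to all of $\R^4$, that is, to verify the Hamilton--Jacobi equation $\sup_{A\in\mathsf X}D\threebar{P}[AP]=0$ on the open set of rank-two $P\in M_2(\R)\cong\R^4$. For this I would analyse $\partial\threebar{\cdot}$ at a rank-two $P$ via an optimal decomposition $P=\sum_k p_k q_k^{\mathsf T}$ (its dual norm is the operator norm of $(\R^2,N)\to(\R^2,N_1^\ast)$), exploit the fact that $|\det P(t)|$ decays at rate at least $1+2\lambda$ along every trajectory (so a constant-$\threebar{\cdot}$ trajectory issuing from a rank-two point must approach the rank-one variety), and construct the required constant-norm trajectories by lifting the bang-bang extremal trajectories of the two factor systems: one uses $X_2$ on the intervals where the first-factor extremal trajectory runs along $A_1$, and $X_0$ or $X_1$ --- as prescribed by the second-factor extremal law --- on the intervals where it runs along $A_0$. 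Granting this, $\threebar{\cdot}$ is a Barabanov norm for $\mathsf X$, and its uniqueness up to scaling should follow from the sufficient conditions of \cite{ChGaMa15}, verified here using that $N$ and $N_1$ are the unique Barabanov norms of their respective planar systems.

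Finally, the failure of strict convexity is inherited from $N_1$. The Barabanov trajectory of $\{A_0,A_1\}$ from $(0,1)$ must reach the horizontal axis as far from the origin as any trajectory can, and the optimal way to do so is to run along $A_1$ for a finite time and then be carried by $A_0$ straight down onto the axis; along that final descent $N_1$ is non-increasing (extremality) and non-decreasing (the Barabanov condition forces the descent to be ``as unstable as possible''), hence constant, so $N_1$ is constant on a nondegenerate vertical segment of its unit sphere. This is precisely the mechanism of the Chitour--Gaye--Mason example, produced here by the zero eigenvalue of $A_0=\operatorname{diag}(0,-1)$, but now confined to the first tensor factor, so that $\mathsf X$ itself contains no singular matrix. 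If $N_1\equiv 1$ on a segment $[a,b]$ with $a\ne b$, then for any $q$ with $N(q)=1$ the tensors $((1-\sigma)a+\sigma b)\otimes q$ ($\sigma\in[0,1]$) are all rank one and satisfy $\threebar{((1-\sigma)a+\sigma b)\otimes q}=N_1((1-\sigma)a+\sigma b)\,N(q)=1$; hence $[a\otimes q,\,b\otimes q]$ is a nondegenerate segment lying in the unit sphere of $\threebar{\cdot}$, so $\threebar{\cdot}$ is not strictly convex although, as shown above, every element of $\mathsf X$ is Hurwitz. The main difficulty in the whole argument is the rank-two part of the Barabanov verification, together with the attendant uniqueness statement.
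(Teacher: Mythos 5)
Your ``soft'' steps are essentially fine: the Hurwitz computation, the irreducibility argument via the complex structure $(A_1+I)\otimes I$ (a nice alternative to the paper's explicit generation of the full matrix algebra), and the observation that the projective crossnorm built from extremal norms of the two planar factors is an \emph{extremal} norm for $\mathsf{X}$ (since $e^{tX_i}$ is either $e^{tA_0}\otimes e^{tB_j}$ or $e^{tA_1}\otimes I$, each a contraction for the crossnorm), which together with the trajectory $e_1\otimes q(t)$ gives $\Lambda(\mathsf{X})=0$ and marginal stability. The failure of strict convexity would also follow easily \emph{if} your candidate norm were known to be the Barabanov norm, since a projective crossnorm is multiplicative on rank-one tensors and the first-factor norm is flat on vertical segments.

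The genuine gap is exactly the step you defer: you never establish that the projective tensor norm has the Barabanov property, nor the uniqueness statement, and the sketch you give for the rank-two case does not work as described. At a rank-two point $P=p_1\otimes q_1+p_2\otimes q_2$ any trajectory of $\mathsf{X}$ drives \emph{both} summands with the \emph{same} switching law (the paper's decomposition $z(t)=x(t)\otimes y(t)+\hat x(t)\otimes\hat y(t)$ makes this explicit), so you cannot ``lift the bang-bang extremal trajectories of the two factor systems'' independently; moreover the projective norm at a rank-two point is an infimum over decompositions, so constancy of the factor norms along a lifted trajectory would not even give constancy of $\threebar{\cdot}$ there, and the determinant-decay remark only says that a hypothetical constant-norm trajectory approaches the rank-one variety, it does not construct one. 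Likewise, uniqueness is not something you can delegate to ``the sufficient conditions of \cite{ChGaMa15}'' without verifying them, and the paper does not take that route. The paper's actual proof never identifies the Barabanov norm at all: it takes two arbitrary Barabanov norms, uses the tensor decomposition of an extremal trajectory together with Theorem \ref{th:a-pepper}\eqref{it:three} (every $\A$-trajectory converges to the horizontal axis, with $\int_0^\infty\alpha<\infty$ when the limit is nonzero) and Theorem \ref{th:consider-the-lilies}\eqref{it:orly} (angular recurrence of $\B_0$-trajectories when $\int_0^\infty(1-\alpha)=\infty$) to show the ratio of the two norms is maximised at $e_1\otimes e_1$, and concludes by symmetry; non-strict convexity is then proved for the (unknown) Barabanov norm by a two-sided inequality along explicitly chosen trajectories (periodic in the second factor for one direction, extremal for the other). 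As it stands, the hardest part of the theorem — the Barabanov property of your candidate norm and uniqueness — is missing from your argument, and it is not clear the candidate identification is even correct.
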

Here and elsewhere $A\otimes B$ denotes the Kronecker tensor product of the matrices $A$ and $B$, the properties of which are briefly recalled for the reader's benefit in \S\ref{se:two} below.

Intuitively, the mechanism for constructing this example is to ``expand'' the one-dimensional zero eigenspace of $M_0$ into a two-dimensional subspace which is invariant under the two matrices $X_0$ and $X_1$ in such a way that $\{X_0, X_1\}$ is marginally stable when restricted to this subspace. From a more precise viewpoint, the example $\mathsf{X}$ may be seen as a three-state subsystem of the six-state linear switching system whose states are
\begin{align*}A_0 \otimes I &+ I\otimes 0,&A_1 \otimes I &+ I\otimes 0,\\
A_0 \otimes I &+ I\otimes B_0, &A_1 \otimes I &+ I\otimes B_0,\\
A_0 \otimes I &+ I\otimes B_1, &A_1 \otimes I &+ I\otimes B_1. \end{align*}
This system admits trajectories which are given by the Kronecker tensor product of a trajectory of the system $\{A_0, A_1\}$ with a trajectory of the system $\{0, B_0, B_1\}$. In broad terms, our strategy will be to show that $\{A_0, A_1\}$ has a unique Barabanov norm, that this norm is not strictly convex, and that Barabanov norms of the above four-dimensional system are related to those of the two two-dimensional systems in a suitable way. This will allow us to show that the failure of strict convexity for the Barabanov norm of $\{A_0, A_1\}$ is inherited by the four-dimensional, six-state system defined above. Since the six-state system includes a matrix $A_0\otimes I$ which has zero eigenvalues it does not itself answer Chitour, Gaye and Mason's question, but it is possible to show that the three-state system $\{X_0, X_1, X_2\}$ is irreducible, has no Hurwitz matrices in its convex hull, and has the same Barabanov norms as the six-state system, which allows us to obtain the conclusion of Theorem \ref{th:main}. In practice this line of argument may be somewhat simplified by working directly with the three-state system in the first instance and omitting any reference to the six-state system described above, and this will be the approach which we will take in proving Theorem \ref{th:main}.

Similarly to the case for the one-dimensional eigenspace which is preserved by $M_0$ but not by $M_1$, the two-dimensional subspace preserved by $X_0$ and $X_1$ is invariant only for two of the three matrices $X_i$ and not for the system $\mathsf{X}$ as a whole. These two examples therefore have in common the feature that there exists a proper subset of the switching states which is marginally stable but preserves an invariant subspace. This suggests the following modification of Chitour, Gaye and Mason's open problem:
\begin{question}\label{qu:ackquack-motherfuckers}
Let $\A\subset M_d(\R)$ be compact and irreducible, and suppose that there does \emph{not} exist a compact nonempty proper subset $\B$ of $\A$ which is reducible and satisfies $\Lambda(\B)=\Lambda(\A)$. Is it the case that every Barabanov norm for $\A$ is strictly convex?
\end{question}

\subsection{Relationship with the discrete-time problem}

As an aside, we remark that for the corresponding problem of \emph{discrete-time} linear switching systems the study of the geometry of extremal norms is both highly developed and highly successful. In the discrete-time problem, instead of \eqref{eq:that-equation} one considers a trajectory to be a sequence $(x_n)_{n=1}^\infty$ of vectors in $\R^d$ such that $x_{n+1}=A_nx_n$ for all $n \geq 1$, for some sequence $(A_n) \in \A^\N$. Here the set $\A \subset M_d(\R)$ is compact and nonempty but is no longer assumed convex. Similarly to the definition of $\Lambda(\A)$ above one may consider the supremal rate of exponential growth of trajectories of the linear switching system defined by a set $\A$, which in this case coincides with the logarithm of the \emph{joint spectral radius},
\[\varrho(\A):=\lim_{n \to \infty} \max_{A_1,\ldots,A_n \in \A} \left\|A_n\cdots A_1\right\|^{\frac{1}{n}},\]
a quantity which was introduced in \cite{RoSt60} and which is now the subject of a very extensive literature which we do not attempt to encompass here.
The same four stability regimes may be defined for the discrete-time linear switching system, and extremal norms and Barabanov norms may be defined in a similar manner. In the series of articles \cite{GuPr13,GuPr16,GuZe08,GuZe09,GuZe15,Me20,Pr22} it was demonstrated that in ``typical'' cases there exists a Barabanov norm which is either piecewise affine or piecewise quadratic, and that using these norms the joint spectral radius can be computed explicitly by an algorithmic procedure. For discrete-time linear switching systems the classification of $\A$ as exponentially stable, exponentially unstable or otherwise may therefore be satisfactorily resolved  in typical cases by exploiting the simple geometry of the associated Barabanov norm. While the situation in the continuous-time case is far less developed, we suggest that the continued investigation of the geometry of Barabanov norms holds some hope of the possibilty of developing a similar framework for the computation of the quantity $\Lambda(\A)$ and consequently for the characterisation of the stability regimes of linear switching systems. 

\subsection{Structure of the article} In \S2 below we present some general facts concerning linear switching systems which will be useful throughout subsequent proofs. In the succeeding two sections we investigate the planar linear switching systems defined by the sets $\{A_0, A_1\}$,  $\{B_0, B_1\}$ and $\{0, B_0, B_1\}$, drawing on ideas from earlier works such as \cite{Bo02,ChGaMa15,Ra96}. In the final section of this article we combine the results of these analyses to prove Theorem \ref{th:main}.

\section{Preliminaries}\label{se:two}

\subsection{Notation} In this section we collect together some fundamental results which will assist our exposition. Henceforth all convex sets $\A\subset M_d(\R)$ will be the convex hull of a finite set $\{A_0,\ldots,A_{N}\}\subset M_d(\R)$. Since every element of $\A$ may be written as a convex combination of the elements $A_0,\ldots,A_{N}$, instead of representing switching laws directly as functions $A \colon [0,\infty) \to \A$ we will prefer to represent them in the form
\[A(t)=\left(1-\sum_{i=1}^{N}\alpha_i(t)\right)A_0 + \sum_{i=1}^{N} \alpha_i(t)A_i\]
where each $\alpha_i$ is a measurable function $[0,\infty) \to [0,1]$ and where $\sum_{i=1}^{N}\alpha_i(t)\leq1$ a.e. In this formulation \eqref{eq:that-equation} becomes
\begin{equation}\label{eq:nanas}x'(t)=\left(1-\sum_{i=1}^{N}\alpha_i(t)\right)A_0x(t)+  \sum_{i=1}^{N} \alpha_i(t)A_ix(t)\end{equation}
and we will subsequently use only this version of \eqref{eq:that-equation} when considering trajectories of linear switching systems.
Throughout the remainder of this article $\|\cdot\|$ will always denote either the Euclidean norm or the corresponding operator norm on matrices, according to context; the symbol $\threebar{\cdot}$ will be reserved for more general norms. 

\subsection{The topological top Lyapunov exponent}
The following key lemma will allow us to simplify the calculation of the quantity $\Lambda(\A)$ by approximating trajectories using matrix exponentials. 
\begin{lemma}\label{le:feuer-frei}
Let $A_0,\ldots,A_N \in M_d(\R)$, let $\mathsf{A}$ denote the convex hull of the set $\{A_0,\ldots,A_N\}$ and let $x \colon [0,\infty) \to \R^d$ be a trajectory of the linear switching system defined by $\mathsf{A}$. Then for every $T>0$ and $\varepsilon>0$ there exist $k \geq 1$, $t_1,\ldots,t_k>0$ and $i_1,\ldots,i_k \in \{0,\ldots,N\}$ such that $\sum_{j=1}^k t_j=T$ and $\|e^{t_k A_{i_k}}\cdots e^{t_1 A_{i_1}} x(0)- x(T)\|\leq \varepsilon$. 
\end{lemma}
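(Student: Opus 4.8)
The plan is to exploit the fact that any measurable switching law can be approximated in a suitable weak sense by a piecewise-constant switching law taking values in the *finite* set $\{A_0,\ldots,A_N\}$, and then to control how the flow depends on such approximations on the compact time interval $[0,T]$. First I would set $M:=\max_i\|A_i\|$ and fix $T,\varepsilon>0$. Writing the given trajectory in the barycentric form \eqref{eq:nanas} with measurable coefficient functions $\alpha_1,\ldots,\alpha_N$ on $[0,T]$, the key observation is that the induced matrix-valued switching law $t\mapsto A(t)$ lies in the weak-$*$ closure (as an $L^\infty([0,T],M_d(\R))$-valued function) of the set of piecewise-constant functions taking values in $\{A_0,\ldots,A_N\}$; equivalently, by a standard ``chattering'' or relaxed-control density argument (the Filippov–Wa\.zewski / relaxation theorem, or simply an elementary partition-and-Lebesgue-point argument), for every $\delta>0$ there is a partition $0=s_0<s_1<\cdots<s_m=T$ and a choice of indices, yielding a step function $\widetilde A(t)$ valued in $\{A_0,\ldots,A_N\}$, such that $\bigl\|\int_0^u A(\tau)\,d\tau-\int_0^u\widetilde A(\tau)\,d\tau\bigr\|\le\delta$ for every $u\in[0,T]$. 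This is the ``relaxed equals convexified'' phenomenon for linear switching systems and is where the convexity of $\mathsf A$ is used.

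Second, I would propagate this closeness from the switching laws to the trajectories. Let $\widetilde x$ solve \eqref{eq:that-equation} with switching law $\widetilde A$ and the same initial condition $\widetilde x(0)=x(0)$. Writing $x(u)-\widetilde x(u)=\int_0^u\bigl(A(\tau)x(\tau)-\widetilde A(\tau)\widetilde x(\tau)\bigr)d\tau$, I split the integrand as $A(\tau)(x(\tau)-\widetilde x(\tau))+(A(\tau)-\widetilde A(\tau))\widetilde x(\tau)$. The first term is handled by Gr\"onwall's inequality, contributing a factor $e^{MT}$. For the second term one integrates by parts against the primitive $F(u):=\int_0^u(A-\widetilde A)$, whose sup-norm is at most $\delta$; since $\widetilde x$ is Lipschitz with constant at most $M\|x(0)\|e^{MT}$ on $[0,T]$, this yields a bound of the form $C(T,M)\,\delta\,\|x(0)\|$. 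Combining, $\|x(T)-\widetilde x(T)\|\le C'(T,M)\,\delta\,\|x(0)\|$, which can be made $\le\varepsilon$ by choosing $\delta$ small enough. Finally, because $\widetilde A$ is piecewise constant with values in $\{A_0,\ldots,A_N\}$, the solution $\widetilde x$ is exactly of the claimed form: if $\widetilde A\equiv A_{i_j}$ on $(s_{j-1},s_j)$ then $\widetilde x(T)=e^{t_m A_{i_m}}\cdots e^{t_1 A_{i_1}}x(0)$ with $t_j:=s_j-s_{j-1}$, and $\sum t_j=T$.

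The main obstacle is the first step: making precise the sense in which a general measurable switching law valued in the convex hull is approximated by a piecewise-constant law valued in the finite vertex set, and in particular obtaining the uniform-in-$u$ control on the primitives $\int_0^u(A-\widetilde A)$ rather than merely $L^1$ or weak-$*$ closeness. One clean way is: partition $[0,T]$ into $n$ equal subintervals; on each subinterval $I_\ell$ the average $\bar A_\ell:=\frac1{|I_\ell|}\int_{I_\ell}A$ lies in $\mathsf A$, hence is a convex combination $\sum_i \beta_{\ell,i}A_i$; subdivide $I_\ell$ further into pieces of lengths proportional to the $\beta_{\ell,i}$ and set $\widetilde A$ equal to $A_i$ on the corresponding piece. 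Then $\int_{I_\ell}\widetilde A=\int_{I_\ell}A$ exactly, so the primitives agree at every partition point $s_\ell$, and between consecutive partition points the discrepancy is at most $2M/n\le\delta$ for $n$ large. Everything else is the routine Gr\"onwall-and-integration-by-parts estimate sketched above, so I would not belabor it.
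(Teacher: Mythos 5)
Your proposal is correct and follows essentially the same route as the paper: both replace the relaxed switching law by a vertex-valued step law on a fine equal partition of $[0,T]$, chosen so that the integrals of the two laws agree exactly on each subinterval, and then propagate this to the trajectories via Lipschitz bounds and Gr\"onwall's inequality. The only differences are bookkeeping ones (the paper matches the integrals of the coefficient functions $\alpha_i$ directly and estimates the trajectory discrepancy without your integration-by-parts step, and it explicitly discards the zero-length pieces so that all $t_j>0$), so no further comment is needed.
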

\begin{proof}Suppose that $\alpha_1,\ldots,\alpha_N \colon [0,\infty) \to [0,1]$ are measurable functions such that $\sum_{i=1}^N \alpha_i \leq 1$ and such that $x$ solves \eqref{eq:nanas} for a.e. $t \geq 0$. Define $C:=T^{-1}+\max_i \|A_i\|$. Gr\"onwall's inequality implies that $\sup_{t \in [0,T]}\|x(t)\|\leq e^{CT}\|x(0)\|$ and it follows easily that $x$ is Lipschitz continuous on $[0,T]$ with Lipschitz constant bounded by $K:=Ce^{CT}\|x(0)\|$.  Fix an integer $k \geq 1$ large enough that $1/k \leq \varepsilon/4CKTe^{CT}$ and define $N$ step functions $\hat\alpha_1,\ldots,\hat\alpha_N \colon [0,\infty) \to \{0,1\}$ as follows. Define $\hat\alpha_i(t)=0$ for all $t \geq T$. For each $j=0,\ldots,k-1$ it is clear that we may partition the interval $[jT/k, (j+1)T/k)$ into $N+1$ disjoint half-open intervals with respective lengths $\int_{jT/k}^{(j+1)T/k} \alpha_1(s)ds$, \ldots, $\int_{jT/k}^{(j+1)T/k} \alpha_N(s)ds$ and $\int_{jT/k}^{(j+1)T/k} (1-\sum_{i=1}^N \alpha_i(s))ds$. (Here a half-open interval with zero length is of course the empty set.) We define $\hat\alpha_1(t)$ to be $1$ on the first of these intervals and zero on the remainder; $\hat\alpha_2(t)$ to be $1$ on the second interval and zero on the others; and so on, defining $\hat\alpha_N(t)$ to be $1$ on the $N^{\mathrm{th}}$ interval and zero on the remainder. In particular every $\hat\alpha_i$ is zero on the $(N+1)^{\mathrm{st}}$ interval. This construction clearly satisfies $0 \leq \sum_{i=1}^N \hat\alpha_i \leq 1$ and also satisfies
\begin{equation}\label{eq:bongly}\int_{\frac{jT}{k}}^{\frac{(j+1)T}{k}} \alpha_i(s)ds = \int_{\frac{jT}{k}}^{\frac{(j+1)T}{k}} \hat\alpha_i(s)ds \end{equation}
for every $i=1,\ldots,N$ and $j=0,\ldots,k-1$. 

Let $\hat{x} \colon [0,\infty) \to \R^d$ be the solution to the Carath\'eodory initial value problem
\[\hat{x}'(t)=\left(1-\sum_{i=1}^{N}\hat\alpha_i(t)\right)A_0\hat{x}(t)+  \sum_{i=1}^{N} \hat\alpha_i(t)A_i\hat{x}(t),\qquad \hat{x}(0)=x(0).\]
 Identical arguments show that $\hat{x}$ is also $K$-Lipschitz on $[0,T]$. By an intuitively clear (but tedious and notationally awkward) inductive argument, $\hat{x}(T)$ has the form
 \[\hat{x}(T)=e^{t_{k(N+1)} A_N}e^{t_{k(N+1)-1} A_{N-1}}\cdots e^{t_{(k-1)(N+1)+1}A_0} \cdots e^{t_1 A_0} x(0)\]
 for some real numbers $t_1,\ldots,t_{k(N+1)}\geq 0$ which correspond to the lengths of the intervals in the construction and which therefore sum to $T$. Clearly by removing those terms in which $t_j=0$ we may render this expression into the form given in the statement of the lemma, so to conclude the proof of the lemma it suffices to show that $\|x(T)-\hat{x}(T)\|\leq \varepsilon$. By Gr\"onwall's inequality this will follow if
 \begin{equation}\label{eq:gron}\|x(t)-\hat x(t)\| \leq \frac{\varepsilon}{e^{CT}} + C\int_0^t \|x(s)-\hat{x}(s)\|ds\end{equation}
for every $t \in [0,T)$, and this is what we now demonstrate.

For every $j=0,\ldots,k-1$ the expressions
\begin{equation}\label{eq:common-1} \int_{\frac{jT}{k}}^{\frac{(j+1)T}{k}}  \left(1- \sum_{i=1}^N\alpha_i(s)\right)A_0 x\left(\frac{jT}{k}\right)+  \sum_{i=1}^N\alpha_i(s)A_i x\left(\frac{jT}{k}\right)ds\end{equation}
and
\begin{equation}\label{eq:common-2} \int_{\frac{jT}{k}}^{\frac{(j+1)T}{k}} \left(1- \sum_{i=1}^N\hat\alpha_i(s)\right)A_0 x\left(\frac{jT}{k}\right)+  \sum_{i=1}^N\hat\alpha_i(s)A_i x\left(\frac{jT}{k}\right)ds\end{equation}
are identical by virtue of \eqref{eq:bongly}. By Lipschitz continuity and the choice of $k$ we have $\|x(s)-x(jT/k)\| <K/k\leq \varepsilon/4CTe^{CT}$ for every $s \in [jT/k, (j+1)T/k)$ and it follows that the integrals
\begin{equation}\label{eq:uncommon-1} \int_{\frac{jT}{k}}^{\frac{(j+1)T}{k}} \left(1- \sum_{i=1}^N\alpha_i(s)\right)A_0 x\left(s\right)+  \sum_{i=1}^N\alpha_i(s)A_i x\left(s\right)ds\end{equation}
and
\begin{equation}\label{eq:uncommon-2} \int_{\frac{jT}{k}}^{\frac{(j+1)T}{k}}  \left(1- \sum_{i=1}^N\hat\alpha_i(s)\right)A_0 x\left(s\right)+  \sum_{i=1}^N\hat\alpha_i(s)A_i x\left(s\right)ds\end{equation}
respectively differ from \eqref{eq:common-1} and \eqref{eq:common-2} by no more than $\varepsilon/4ke^{CT}$. Since the integrals \eqref{eq:common-1} and \eqref{eq:common-2} are equal, the difference in norm between \eqref{eq:uncommon-1} and \eqref{eq:uncommon-2}  is therefore not greater than  $\varepsilon/2ke^{CT}$. For every $j=0,\ldots,k$ the difference in norm between the integral
\begin{equation}\label{eq:deriv-1}\int_0^{\frac{jT}{k}}x'(s)ds=\int_0^{\frac{jT}{k}}\left(1- \sum_{i=1}^N\alpha_i(s)\right)A_0 x\left(s\right)+  \sum_{i=1}^N\alpha_i(s)A_i x\left(s\right)ds\end{equation}
and the integral
\begin{equation}\label{eq:deriv-2}\int_0^{\frac{jT}{k}} \left(1- \sum_{i=1}^N\hat\alpha_i(s)\right)A_0 x\left(s\right)+  \sum_{i=1}^N\hat\alpha_i(s)A_i x\left(s\right)ds\end{equation}
is thus bounded by $\varepsilon/2e^{CT}$. The difference between \eqref{eq:deriv-1} and the integral
\begin{equation}\label{eq:deriv-4}\int_0^{\frac{jT}{k}} \hat{x}'(s)ds= \int_0^{\frac{jT}{k}} \left(1- \sum_{i=1}^N\hat\alpha_i(s)\right)A_0 \hat x\left(s\right)+  \sum_{i=1}^N\hat\alpha_i(s)A_i \hat x\left(s\right)dt\end{equation}
is therefore bounded in norm by $\varepsilon/2e^{CT}$ plus the norm of the difference of \eqref{eq:deriv-2} and \eqref{eq:deriv-4}, hence is less than or equal to
\[\frac{\varepsilon}{2e^{CT}}+C\int_0^{\frac{jT}{k}} \left\|x(s)-\hat{x}(s)\right\|ds.\]
We have now shown that for every $j=0,\ldots,k-1$,
\begin{align*}\left\|x\left(\frac{jT}{k}\right) -\hat{x}\left(\frac{jT}{k}\right)\right\| &=\left\| \int_0^{\frac{jT}{k}} {x}'(s)ds -\int_0^{\frac{jT}{k}} \hat{x}'(s)ds\right\|\\
&\leq \frac{\varepsilon}{2e^{CT}}+C\int_0^{\frac{jT}{k}} \left\|x(s)-\hat{x}(s)\right\|ds.\end{align*}
Now given $t \in [0,T)$, choose $j$ such that $jT/k \leq t <(j+1)T/k$. Using the $K$-Lipschitz continuity of $x$ and $\hat{x}$ on $[0,T]$ we have $\|x(jT/k)-x(t)\| < K/k\leq  \varepsilon/4CTe^{CT}\leq \varepsilon/4e^{CT}$ and likewise $\|\hat{x}(jT/K)-\hat{x}(t)\|< \varepsilon/4e^{CT}$, so by the reverse triangle inequality
\[\left|\left\|x(t)-\hat{x}(t)\right\| - \left\|x\left(\frac{jT}{k}\right) -\hat{x}\left(\frac{jT}{k}\right)\right\|\right| \leq \frac{\varepsilon}{2e^{CT}}.\]
Consequently
\begin{align*}\left\|x(t)-\hat{x}(t)\right\| &\leq  \left\|x\left(\frac{jT}{k}\right) -\hat{x}\left(\frac{jT}{k}\right)\right\|+ \frac{\varepsilon}{2e^{CT}}\\
&\leq \frac{\varepsilon}{e^{CT}}+C\int_0^{\frac{jT}{k}} \left\|x(s)-\hat{x}(s)\right\|ds\\
&\leq \frac{\varepsilon}{e^{CT}}+C\int_0^t \left\|x(s)-\hat{x}(s)\right\|ds\end{align*}
and we have proved \eqref{eq:gron}. Since $t \in [0,T)$ was arbitrary the desired bound $\|\hat{x}(T)-x(T)\|\leq\varepsilon$ follows by Gr\"onwall's inequality and the proof of the lemma is complete.
\end{proof}
The following characterisation follows directly from Lemma \ref{le:feuer-frei} and the definition of $\Lambda(\A)$.
\begin{corollary}\label{co:eval}
Let $\mathsf{A}$ denote the convex hull of $\{A_0,\ldots,A_N\}\subset M_d(\R)$. Then
\[\Lambda(\A)=\lim_{T \to \infty} \sup_{k \geq 1} \sup_{\substack{t_1,\ldots,t_k>0\\ \sum_{i=1}^k t_i=T}}\max_{0\leq i_1,\ldots,i_k \leq N} \frac{1}{T}\log\left\|e^{t_k A_{i_k}}\cdots e^{t_1 A_{i_1}}\right\|.\]
\end{corollary}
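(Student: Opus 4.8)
The plan is to establish the claimed identity term-by-term before taking the limit. Fix $T>0$ and abbreviate the right-hand inner expression by
\[\Phi(T):=\sup_{k\geq1}\ \sup_{\substack{t_1,\ldots,t_k>0\\ \sum_{i=1}^{k}t_i=T}}\ \max_{0\leq i_1,\ldots,i_k\leq N}\frac1T\log\bigl\|e^{t_kA_{i_k}}\cdots e^{t_1A_{i_1}}\bigr\|,\]
and write $L(T):=\sup_x \frac1T\log\bigl(\|x(T)\|/\|x(0)\|\bigr)$ for the supremum over all trajectories of the system defined by $\A$ with $x(0)\neq0$, so that $\Lambda(\A)=\lim_{T\to\infty}L(T)$ holds by definition. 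I would prove that $L(T)=\Phi(T)$ for every fixed $T>0$ and then let $T\to\infty$. Note that $\Phi(T)$ is finite, since $\|e^{t_jA_{i_j}}\|\leq e^{t_j\max_i\|A_i\|}$ forces every product appearing above to have norm at most $e^{T\max_i\|A_i\|}$.

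For the inequality $L(T)\leq\Phi(T)$ I would invoke Lemma \ref{le:feuer-frei}, which supplies the only nontrivial ingredient. Let $x$ be a trajectory with $x(0)\neq0$ and let $\varepsilon>0$. The lemma furnishes a product $P=e^{t_kA_{i_k}}\cdots e^{t_1A_{i_1}}$ with $t_j>0$, $\sum_j t_j=T$ and $\|Px(0)-x(T)\|\leq\varepsilon$, whence
\[\|x(T)\|\leq\|Px(0)\|+\varepsilon\leq\|P\|\,\|x(0)\|+\varepsilon.\]
Since $\frac1T\log\|P\|\leq\Phi(T)$ and hence $\|P\|\leq e^{T\Phi(T)}$, and since $\varepsilon>0$ is arbitrary, I would conclude that $\|x(T)\|\leq e^{T\Phi(T)}\|x(0)\|$, i.e. $\frac1T\log\bigl(\|x(T)\|/\|x(0)\|\bigr)\leq\Phi(T)$. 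Taking the supremum over all admissible $x$ yields $L(T)\leq\Phi(T)$.

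The reverse inequality $\Phi(T)\leq L(T)$ is elementary and needs no appeal to the lemma, because every product arising in $\Phi(T)$ is realised exactly by a trajectory. Given $k$, times $t_1,\ldots,t_k>0$ summing to $T$ and indices $i_1,\ldots,i_k$, the piecewise-constant switching law equal to $A_{i_j}$ on the $j$-th subinterval of length $t_j$ is a measurable function into $\A$ — each vertex $A_{i_j}$ lies in the convex hull $\A$ — and concatenating the matrix-exponential solutions on successive subintervals shows that the corresponding solution of \eqref{eq:nanas} satisfies $x(T)=e^{t_kA_{i_k}}\cdots e^{t_1A_{i_1}}x(0)=Px(0)$. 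Choosing $x(0)$ to be a unit vector at which the operator norm $\|P\|$ is attained (possible since the unit sphere of $\R^d$ is compact) produces a trajectory with $\|x(T)\|/\|x(0)\|=\|P\|$, so $\frac1T\log\|P\|\leq L(T)$; taking the supremum over all such products gives $\Phi(T)\leq L(T)$.

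Combining the two inequalities yields $L(T)=\Phi(T)$ for every $T>0$, and letting $T\to\infty$ together with $\Lambda(\A)=\lim_{T\to\infty}L(T)$ gives the stated formula; in particular the limit on the right-hand side exists. Its existence can also be seen directly from the submultiplicativity $R(S+T)\leq R(S)R(T)$ of $R(T):=e^{T\Phi(T)}=\sup_P\|P\|$, obtained by splitting a product at the factor straddling time $S$, followed by Fekete's subadditive lemma applied to $\log R$. I do not anticipate a genuine obstacle: the substantive work resides entirely in Lemma \ref{le:feuer-frei}, and the only points requiring care are the passage from the operator norm of $P$ to its action on the specific vector $x(0)$ in the first inequality and the verification that piecewise-constant laws are legitimate switching laws in the second, both of which are routine.
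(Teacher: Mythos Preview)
Your proof is correct and is precisely the argument the paper has in mind: the paper's own proof consists of the single sentence that the corollary ``follows directly from Lemma \ref{le:feuer-frei} and the definition of $\Lambda(\A)$'', and your two inequalities $L(T)\leq\Phi(T)$ (via Lemma \ref{le:feuer-frei}) and $\Phi(T)\leq L(T)$ (via piecewise-constant switching laws) are exactly the unwinding of that sentence.
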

Using Corollary \ref{co:eval} we derive the following tool for computing values of $\Lambda$:
\begin{proposition}\label{pr:calculo}
Let $A_0,\ldots,A_{N_1}, B_0,\ldots,B_{N_2} \in M_d(\R)$ and let $\A$ and $\B$ denote the convex hulls of $\{A_0,\ldots,A_{N_1}\}$ and $\{B_0,\ldots,B_{N_2}\}$ respectively. Then:
\begin{enumerate}[(i)]
\item\label{it:obv}
If $\A \subseteq \B$ then $\Lambda(\A) \leq \Lambda(\B)$.
\item\label{it:less-obv}
Let $\mathsf{C}$ denote the convex hull of $\{A_0,\ldots,A_{N_1},B_0,\ldots,B_{N_2}\}$. If every $A_i$ commutes with every $B_j$, then $\Lambda(\mathsf{C})=\max\{\Lambda(\A), \Lambda(\B)\}$.
\item\label{it:even-less-obv}
Let $\mathsf{D}$ denote the convex hull of $\{A_i + B_j \colon 0 \leq i \leq N_1,\text{ }0 \leq j \leq N_2\}$. If every $A_i$ commutes with every $B_j$, then $\Lambda(\mathsf{D})\leq\Lambda(\A)+ \Lambda(\B)$.
\end{enumerate}
\end{proposition}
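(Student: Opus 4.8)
The plan is to obtain (\ref{it:obv}) directly from the definition of $\Lambda$, and to reduce (\ref{it:less-obv}) and (\ref{it:even-less-obv}) to Corollary \ref{co:eval}, exploiting throughout that commuting matrices have commuting exponentials.

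For (\ref{it:obv}) I would simply note that if $\A \subseteq \B$ then every measurable switching law taking values in $\A$ also takes values in $\B$, so every trajectory of the linear switching system defined by $\A$ is a trajectory of the one defined by $\B$. The supremum defining $\Lambda(\A)$ is then a supremum over a subclass of the trajectories entering the supremum defining $\Lambda(\B)$, and the inequality follows by monotonicity.

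For (\ref{it:less-obv}), the bound $\Lambda(\mathsf{C}) \geq \max\{\Lambda(\A),\Lambda(\B)\}$ is immediate from (\ref{it:obv}) because $\A, \B \subseteq \mathsf{C}$. For the reverse bound I would apply Corollary \ref{co:eval} to $\mathsf{C}$ and consider a product $e^{t_k C_{i_k}}\cdots e^{t_1 C_{i_1}}$ in which each factor is either some $e^{tA_i}$ or some $e^{sB_j}$. Since each $A_i$ commutes with each $B_j$, each factor $e^{tA_i}$ commutes with each factor $e^{sB_j}$; hence all the $B$-factors may be slid to the right past all the $A$-factors, leaving the relative order among the $A$-factors and among the $B$-factors undisturbed, to produce a factorisation $P_A P_B$ where $P_A$ is a product of $A$-exponentials of total time $T_A$, $P_B$ is a product of $B$-exponentials of total time $T_B$, and $T_A + T_B = T$. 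Submultiplicativity gives $\|e^{t_k C_{i_k}}\cdots e^{t_1 C_{i_1}}\| \leq \|P_A\|\,\|P_B\|$. From Corollary \ref{co:eval}, supplemented for short total times by the crude bound $\|e^{tA_i}\|\leq e^{t\max_i\|A_i\|}$, one gets for each $\varepsilon>0$ a constant $C_{\A}$ independent of the product with $\|P_A\|\leq C_{\A}e^{T_A(\Lambda(\A)+\varepsilon)}$ for every total time $T_A\geq 0$, and similarly a constant $C_{\B}$. Multiplying and using $T_A+T_B=T$ bounds the product by $C_{\A}C_{\B}e^{T(\max\{\Lambda(\A),\Lambda(\B)\}+\varepsilon)}$; taking $\frac{1}{T}\log$, then $T\to\infty$ in Corollary \ref{co:eval}, then $\varepsilon\to 0$, yields $\Lambda(\mathsf{C})\leq\max\{\Lambda(\A),\Lambda(\B)\}$.

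Part (\ref{it:even-less-obv}) follows the same template but is cleaner. Because $A_i$ commutes with $B_j$ one has $e^{t(A_i+B_j)}=e^{tA_i}e^{tB_j}$, so a product $e^{t_k(A_{i_k}+B_{j_k})}\cdots e^{t_1(A_{i_1}+B_{j_1})}$ expands as an alternating word in $A$- and $B$-exponentials; sliding the $B$-exponentials past the $A$-exponentials as above yields a factorisation $P_AP_B$ in which $P_A$ and $P_B$ each have total time exactly $T$. For large $T$, Corollary \ref{co:eval} gives $\frac{1}{T}\log\|P_A\|\leq\Lambda(\A)+\varepsilon$ and $\frac{1}{T}\log\|P_B\|\leq\Lambda(\B)+\varepsilon$, so $\frac{1}{T}\log\|e^{t_k(A_{i_k}+B_{j_k})}\cdots e^{t_1(A_{i_1}+B_{j_1})}\|\leq\Lambda(\A)+\Lambda(\B)+2\varepsilon$; letting $T\to\infty$ and then $\varepsilon\to 0$ gives $\Lambda(\mathsf{D})\leq\Lambda(\A)+\Lambda(\B)$. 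The one point deserving real care is the uniform-in-total-time exponential bound in (\ref{it:less-obv}): the split $T=T_A+T_B$ produced by the rearrangement is not under our control, so Corollary \ref{co:eval}, which only governs products of large total time, must be combined with an elementary estimate absorbing the short-time regime into the constants $C_{\A},C_{\B}$; no such issue arises in (\ref{it:even-less-obv}), where both factors already carry total time $T$. Beyond that, the argument is bookkeeping around the commutation of matrix exponentials.
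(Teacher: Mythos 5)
Your argument is correct and follows essentially the same route as the paper's: (i) by monotonicity of the supremum over trajectories, and (ii)--(iii) by using Corollary \ref{co:eval} together with the commutation of the exponentials to rearrange each product into an $\A$-block times a $\B$-block and then invoking submultiplicativity of the norm. The only difference is presentational: where the paper works with the supremal quantities $\alpha(T),\beta(T),\gamma(T),\delta(T)$ and leaves the final limiting step as ``straightforward,'' you make explicit the uniform-in-time exponential bounds (with the crude short-time estimate absorbed into constants) needed because the split $T=T_A+T_B$ is not under one's control -- precisely the point the paper elides.
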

\begin{proof}
To prove \eqref{it:obv} we simply note that every trajectory of the linear switching system defined by $\A$ is also a trajectory of the system defined by $\B$, and the inequality  $\Lambda(\A)  \leq \Lambda(\B)$ follows directly since the former quantity is a supremum over a smaller set than the latter. To prove \eqref{it:less-obv} some additional notation will be helpful: for every $T>0$ we define
\[\alpha(T):=\sup_{k \geq 1} \sup_{\substack{t_1,\ldots,t_k>0\\ \sum_{i=1}^k t_i=T}}\max_{0\leq i_1,\ldots,i_k \leq N_1} \log\left\|e^{t_k A_{i_k}}\cdots e^{t_1 A_{i_1}}\right\|,\]
\[\beta(T):=\sup_{\ell \geq 1} \sup_{\substack{\tau_1,\ldots,\tau_\ell>0\\ \sum_{j=1}^\ell \tau_j=T}}\max_{0\leq j_1,\ldots,j_\ell\leq N_2} \log\left\|e^{\tau_\ell B_{j_\ell}}\cdots e^{\tau_1 B_{j_1}}\right\|,\]
and let $\gamma(T)$ denote the corresponding quantity for $\mathsf{C}$. We then have
\[\Lambda(\A)=\lim_{T \to \infty} \frac{\alpha(T)}{T}, \quad \Lambda(\B)=\lim_{T \to \infty} \frac{\beta(T)}{T},\quad \Lambda(\A)=\lim_{T \to \infty} \frac{\gamma(T)}{T}.\]
Now, using commutativity any product of matrices of the form $e^{t_i A_i}$ or $e^{t_j B_j}$ may be permuted so as to have the matrices $e^{t_iA_i}$ collected together at the beginning of the product and the matrices $e^{t_jB_j}$ collected together at the end (but with the matrices $e^{t_iA_i}$ in the same order, among themselves, as they originally appeared in the product, and likewise with the matrices $e^{t_j B_j}$) without changing the value of the product. We may therefore write 
\[\gamma(T)= \sup_{k,\ell \geq 1} \sup_{\substack{t_1,\ldots,t_k>0\\ \tau_1,\ldots,\tau_\ell >0\\ \sum_{i=1}^k t_i+\sum_{j=1}^\ell \tau_j =T}}\max_{\substack{0\leq i_1,\ldots,i_k \leq N_1\\ 0 \leq j_1,\ldots,j_\ell \leq N_2}} \log\left\|e^{t_k A_{i_k}}\cdots e^{t_1 A_{i_1}}e^{\tau_\ell B_{j_\ell}}\cdots e^{\tau_1 B_{j_1}}\right\|\]
for every $T>0$, and using the elementary inequality 
\[\log\left\|e^{t_k A_{i_k}}\cdot\cdot e^{t_1 A_{i_1}}e^{\tau_\ell B_{j_\ell}}\cdot\cdot e^{\tau_1 B_{j_1}}\right\|\leq  \log\left\|e^{t_k A_{i_k}}\cdots e^{t_1 A_{i_1}}\right\|+ \log\left\|e^{\tau_\ell B_{j_\ell}}\cdots e^{\tau_1 B_{j_1}}\right\|\]
it follows easily that
\begin{align*}\gamma(T)&\leq \sup_{\substack{0 <T_1, T_2< T\\ T_1+T_2=T}} \alpha(T_1)+\beta(T_2)\\&\leq \sup_{\substack{0 <T_1, T_2< T\\ T_1+T_2=T}} \max\{\alpha(T_1), \beta(T_1)\}+\max\{\alpha(T_2), \beta(T_2)\}\end{align*}
for every $T>0$. It is now straightforward to deduce that
\[\Lambda(\mathsf{C}) =\lim_{T \to \infty} \frac{\gamma(T)}{T} \leq \lim_{T \to \infty} \frac{\max\{\alpha(T), \beta(T)\}}{T}=\max\{\Lambda(\A), \Lambda(\B)\}.\]
Since $\A\subseteq \mathsf{C}$ and $\B\subseteq \mathsf{C}$ the reverse inequality follows directly from \eqref{it:obv}, and this establishes \eqref{it:less-obv}. To prove \eqref{it:even-less-obv} we retain the notation for $\alpha(T)$ and $\beta(T)$, and also write
\[\delta(T):=\sup_{k \geq 1} \sup_{\substack{t_1,\ldots,t_k>0\\ \sum_{i=1}^k t_i=T}}\max_{\substack{0\leq i_1,\ldots,i_k \leq N_1\\ 0 \leq j_1,\ldots,j_k \leq N_2}} \log\left\|e^{t_k (A_{i_k}+B_{j_k})}\cdots e^{t_1 (A_{i_1}+B_{j_1})}\right\|\]
so that $\Lambda(\mathsf{D})=\lim_{T \to \infty} \delta(T)/T$. Since every $A_i$ commutes with every $B_j$, we have $e^{t(A_i+B_j)}=e^{tA_i}e^{tB_j}=e^{tB_j}e^{tA_i}$ for every $i,j$ and every $t \geq 0$. Consequently we may likewise permute the terms of the product so as to gather like terms together at either end, obtaining
\[\delta(T)=\sup_{k \geq 1} \sup_{\substack{t_1,\ldots,t_k>0\\ \sum_{i=1}^k t_i=T}}\max_{\substack{0\leq i_1,\ldots,i_k \leq N_1\\ 0 \leq j_1,\ldots,j_k \leq N_2}} \log\left\|e^{t_kA_{i_k}}\cdots e^{t_1A_1}e^{t_kB_{j_k}}\cdots e^{t_1 B_{j_1}}\right\|\]
and it follows easily that $\delta(T)\leq \alpha(T)+\beta(T)$ for every $T>0$. The result follows.
\end{proof}
\subsection{Extremal norms}
The following key result is proved in \cite{Ba88b}: 
\begin{proposition}\label{pr:baraban}
Let $\mathsf{A}\subset M_d(\R)$ be compact, convex and irreducible. Then there exists a Barabanov norm for $\mathsf{A}$.
\end{proposition}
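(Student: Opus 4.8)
This is Barabanov's theorem~\cite{Ba88b}. As explained in the introduction we may assume $\Lambda(\A)=0$, so that a Barabanov norm is a norm $\threebar{\cdot}$ which is non-increasing along every trajectory and such that from every point there issues a trajectory along which it stays constant. The plan is to first produce \emph{some} extremal norm---a norm that is merely non-increasing along trajectories---and then to refine it into a Barabanov norm.

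\emph{Stage 1: an extremal norm exists.} For $t\geq0$ let $\mathsf{S}_t$ denote the closure of $\{e^{t_kA_{i_k}}\cdots e^{t_1A_{i_1}}\colon i_j\in\{0,\ldots,N\},\ \sum_jt_j=t\}$; this is a compact set and, by Lemma~\ref{le:feuer-frei}, the reachable set $\{x(t)\colon x(0)=u\}$ of the system has closure $\mathsf{S}_tu$. I would try the norm
\[\threebar{u}:=\sup_{t\geq0}\ \sup_{P\in\mathsf{S}_t}\|Pu\|\in[0,+\infty],\]
which is absolutely homogeneous and subadditive with $\threebar{u}\geq\|u\|$; the one nontrivial point is finiteness, equivalently boundedness of $M(t):=\sup_{P\in\mathsf{S}_t}\|P\|$. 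Since $\mathsf{S}_t\mathsf{S}_s=\mathsf{S}_{t+s}$, the set $V:=\{u\colon\threebar{u}<\infty\}$ is a subspace with $e^{sA}V\subseteq V$ for all $A\in\A$, hence $\A$-invariant, so by irreducibility $V=\{0\}$ or $V=\R^d$. To exclude $V=\{0\}$ one notes that $M$ is submultiplicative with $t^{-1}\log M(t)\to\Lambda(\A)=0$ by Corollary~\ref{co:eval}, so $M(t)\geq1$ and $M(t)=e^{o(t)}$; if $M$ were unbounded, take $R_k\in\mathsf{S}_{\tau_k}$ with $\|R_k\|\to\infty$ (forcing $\tau_k\to\infty$) and extract a limit $Q$, $\|Q\|=1$, of $R_k/\|R_k\|$; the span of the ranges of all such limits is a nonzero $\A$-invariant subspace and hence is $\R^d$, whereas $\Lambda(\A)=0$ forces $\max_i\tr A_i\leq0$ (because $\lambda(A_i)\geq d^{-1}\tr A_i$), so $\det R_k=e^{\sum_jt_j\tr A_{i_j}}\in(0,1]$ and no such $Q$ can be invertible; a rank count over the limits of maximal rank then yields a contradiction. \textbf{This boundedness step is the substantive content of the theorem and the only place irreducibility is genuinely used; I expect it to be the main obstacle, and would follow the argument of~\cite{Ba88b}.} Granting it, $V=\R^d$ and $\threebar{\cdot}$ is a norm; moreover if $x$ is a trajectory and $t_1\geq t_0$, the concatenation of $t\mapsto x(t_0+t)$ on $[0,t_1-t_0]$ with any trajectory issuing from $x(t_1)$ is a trajectory issuing from $x(t_0)$, so $\threebar{x(t_1)}\leq\threebar{x(t_0)}$ and $\threebar{\cdot}$ is extremal.

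\emph{Stage 2: refinement.} Given an extremal norm $g$ and $s>0$, set $T_sg(u):=\sup\{g(x(s))\colon x(0)=u\}$. Since a trajectory from $u+v$ has the form $t\mapsto\Phi(t)(u+v)=\Phi(t)u+\Phi(t)v$ for the relevant fundamental solution $\Phi$, with $t\mapsto\Phi(t)u$ and $t\mapsto\Phi(t)v$ themselves trajectories, $T_sg$ is subadditive; it is homogeneous and bounded below by the norm $u\mapsto g(e^{sA_0}u)$, hence a norm, and concatenation shows it is extremal with $T_sg\leq g$. Splitting trajectories at time $s$ gives $T_{s+s'}g=T_s(T_{s'}g)$, so $s\mapsto T_sg$ is pointwise non-increasing and $g_\infty:=\lim_{s\to\infty}T_sg=\inf_{s>0}T_sg$ exists; it is homogeneous, non-increasing along trajectories, and subadditive (the monotonicity lets one pass the limit through the sum). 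Crucially $g_\infty\not\equiv0$: if it were, then covering the $g$-unit sphere by the open sets $\{T_sg<\tfrac12g\}$ and using compactness would give $T_sg\leq\tfrac12g$ for some fixed $s$, hence $T_{ns}g\leq2^{-n}g$ and $\|x(t)\|\leq C2^{-t/s}\|x(0)\|$ along every trajectory, contradicting $\Lambda(\A)=0$. Since $\{g_\infty=0\}$ is an $\A$-invariant subspace, irreducibility now forces it to be $\{0\}$, so $g_\infty$ is a genuine norm.

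Finally $g_\infty$ is Barabanov. The family $\{T_{s'}g\}_{s'>0}$ is uniformly Lipschitz (each is dominated by $g$), so on the Arzel\`a--Ascoli-compact set of length-$s$ trajectory segments one may interchange $T_s$ with the monotone limit, obtaining $T_sg_\infty=\inf_{s'}T_{s+s'}g=g_\infty$ for every $s>0$. Fixing $u$ and $s$, the supremum defining $T_sg_\infty(u)=g_\infty(u)$ is attained (continuity of the norm $g_\infty$ on the compact set of trajectory segments), so there is a trajectory from $u$ along which $g_\infty$ equals $g_\infty(u)$ throughout $[0,s]$; taking $s=1,2,\ldots$, extending each such segment to $[0,\infty)$ arbitrarily, and passing to an Arzel\`a--Ascoli limit (limits of trajectories are trajectories, by convexity of $\A$ and weak-$*$ compactness of the coefficient functions) produces a single trajectory from $u$ along which $g_\infty$ is globally constant. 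Thus $g_\infty$ is a Barabanov norm for $\A$; the only serious difficulty in this scheme is the boundedness argument of Stage~1.
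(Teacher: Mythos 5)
The paper does not actually prove this proposition: it is imported wholesale from Barabanov's article \cite{Ba88b}, so any real proof attempt necessarily goes beyond what the paper contains. Your two-stage scheme (first an extremal norm via boundedness of the normalised reachable semigroup, then refinement through the operators $T_s$ and a monotone limit) is the standard route, and your Stage~2 is essentially sound: the semigroup identity $T_{s+s'}g=T_s(T_{s'}g)$, the non-degeneracy argument on the compact $g$-unit sphere, the equi-Lipschitz/Dini interchange giving $T_sg_\infty=g_\infty$, and the Arzel\`a--Ascoli extraction of a globally constant trajectory all work, granted the standard facts (which do use convexity of $\A$) that reachable sets at a fixed time are compact and that locally uniform limits of trajectories are again trajectories.

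The genuine gap is exactly where you flag it, in Stage~1, and the sketch you give does not close. From $\tr A_i\le d\,\lambda(A_i)\le 0$ (after normalising $\Lambda(\A)=0$) and $\|R_k\|\to\infty$ you correctly conclude that $\det R_k\in(0,1]$, hence that every limit point $Q$ of $R_k/\|R_k\|$ is singular with $\|Q\|=1$, and that the span $W$ of the ranges of all such limit points is invariant, so $W=\R^d$ by irreducibility. But this is not yet a contradiction: a family of singular matrices can perfectly well have ranges spanning $\R^d$, so ``no such $Q$ can be invertible'' contradicts nothing, and the closing ``rank count over the limits of maximal rank'' is a gesture rather than an argument. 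The actual proof (Barabanov's, or the Elsner-type argument familiar from the discrete-time joint spectral radius) needs more structure: one shows the set of limit points is stable under multiplication by semigroup elements and, after renormalisation, under products among themselves, passes to a limit point of minimal nonzero rank, and uses invariance of both the span of the ranges and the common kernel to force either an invertible limit point or a rank drop, which is what finally collides with $\det R_k\le 1$ and irreducibility. Since you explicitly defer this step to \cite{Ba88b}, your proposal is in the end the same as the paper's, namely an appeal to Barabanov's theorem; that is acceptable here, but the Stage~1 sketch should not be presented as though it were within epsilon of a complete argument.
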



The following straightforward result will give a simple condition for verifying that a given norm is extremal:
\begin{proposition}\label{pr:lyooo}
Let $\mathsf{A}$ denote the convex hull of $\{A_0,\ldots,A_N\}\subset M_d(\R)$ and let $f \colon \R^d \to \R$ be continuous. Then $t\mapsto f(x(t))$ is non-increasing for every trajectory $x$ of the linear switching system defined by $\A$ if and only if $t\mapsto f(e^{A_it}v)$ is non-increasing for every $i=0,\ldots,N$ and every $v \in \R^d$.\end{proposition}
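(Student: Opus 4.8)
The plan is to prove the nontrivial implication using Lemma~\ref{le:feuer-frei}, the converse being immediate. For the ``only if'' direction I would simply observe that for each fixed $i \in \{0,\ldots,N\}$ and each $v \in \R^d$ the function $t \mapsto e^{A_i t}v$ is itself a trajectory of the linear switching system defined by $\A$, since it solves \eqref{eq:nanas} with the constant switching law $A(t)\equiv A_i \in \A$. Hence if $t\mapsto f(x(t))$ is non-increasing along every trajectory, then in particular $t\mapsto f(e^{A_i t}v)$ is non-increasing for every $i$ and every $v$.

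For the ``if'' direction, suppose $t\mapsto f(e^{A_i t}v)$ is non-increasing for every $i$ and every $v$, and let $x$ be an arbitrary trajectory of the linear switching system defined by $\A$. I would first show that $f(x(T))\leq f(x(0))$ for every $T>0$. Fix $T>0$ and $\varepsilon>0$, and apply Lemma~\ref{le:feuer-frei} to obtain $k\geq 1$, times $t_1,\ldots,t_k>0$ summing to $T$, and indices $i_1,\ldots,i_k$ with $\|e^{t_k A_{i_k}}\cdots e^{t_1 A_{i_1}}x(0) - x(T)\|\leq\varepsilon$. Writing $v_0:=x(0)$ and $v_j:=e^{t_j A_{i_j}}v_{j-1}$ for $1\leq j\leq k$, the hypothesis applied to the vector $v_{j-1}$ and the matrix $A_{i_j}$ (with time $t_j>0$) gives $f(v_j)=f(e^{A_{i_j}t_j}v_{j-1})\leq f(v_{j-1})$, so that $f(v_k)\leq f(v_0)=f(x(0))$ by telescoping. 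Since $\|v_k - x(T)\|\leq\varepsilon$ and $f$ is continuous, letting $\varepsilon\to 0$ yields $f(x(T))\leq f(x(0))$.

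It then remains to upgrade this to monotonicity on all of $[0,\infty)$. Given $0\leq s<t$, I would apply the preceding to the time-shifted trajectory $\tilde{x}(\cdot):=x(\cdot+s)$, which is again a trajectory of the linear switching system defined by $\A$ because the shifted switching law still takes values in $\A$; with $T:=t-s$ this gives $f(x(t))=f(\tilde{x}(t-s))\leq f(\tilde{x}(0))=f(x(s))$. Hence $t\mapsto f(x(t))$ is non-increasing along $x$, as required.

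I do not anticipate a serious obstacle: essentially all of the analytic content is contained in Lemma~\ref{le:feuer-frei}, and what remains is the elementary telescoping estimate, a routine continuity argument, and the time-translation invariance of the class of trajectories. The only point needing slight care is the passage to the limit $\varepsilon\to 0$, which is legitimate because the bound $f(v_k)\leq f(x(0))$ holds uniformly in $\varepsilon$ while the point $v_k$ (which itself depends on $\varepsilon$) stays within $\varepsilon$ of the fixed target $x(T)$.
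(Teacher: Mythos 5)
Your proof is correct and follows essentially the same route as the paper: the trivial direction via constant switching laws, then Lemma~\ref{le:feuer-frei} combined with the telescoping estimate $f(e^{t_k A_{i_k}}\cdots e^{t_1 A_{i_1}}v)\leq f(v)$, continuity of $f$, and the time-shift of trajectories to upgrade $f(x(T))\leq f(x(0))$ to full monotonicity. Your explicit handling of the $\varepsilon\to 0$ limit merely spells out what the paper leaves as ``it follows directly,'' so there is nothing to change.
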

\begin{proof}
Suppose that $t\mapsto f(e^{A_it}v)$ is non-increasing for every $i=0,\ldots,N$ and every $v \in \R^d$. It follows easily by induction on $k$ that for every $v \in \R^d$, $k\geq 1$, $t_1,\ldots,t_k>0$ and $0\leq i_1,\ldots,i_k \leq N$ we have $f(e^{t_k A_{i_k}} \cdots e^{t_1 A_{i_1}} v)\leq f(v)$. By Lemma \ref{le:feuer-frei} it follows directly that $f(x(t)) \leq f(x(0))$ for every $t \geq 0$, for every trajectory $x$ of the linear switching system defined by $\A$. If $t\mapsto x(t)$ is a trajectory of the linear switching system defined by $\A$ then so too is the function $t \mapsto x(t+\tau)$ for every $\tau \geq 0$, so we deduce that $f(x(t+\tau)) \leq f(x(\tau))$ for all $t, \tau \geq 0$ for every trajectory $x$ as required. On the other hand if  $t\mapsto f(x(t))$ is non-increasing for every trajectory $x$ then obviously $t\mapsto f(e^{tA_i}v)$ is non-increasing for every $i=0,\ldots,N$ and every $v \in \R^d$ as required, since each of the functions $t \mapsto e^{tA_i}v$ is a trajectory of the linear switching system defined by $\A$ with constant switching law $A_i$.\end{proof}

\subsection{Linear algebra}

We first recall some facts concerning the Kronecker product on vectors and matrices, which will be frequently appealed to without comment in the final section of this article. Proofs of all of the properties mentioned may be found in \cite[\S4.2]{HoJo94}. Given two rectangular real matrices $A$ and $B$ with respective dimensions $m_1 \times m_2$ and $n_1 \times n_2$, the Kronecker tensor product $A \otimes B$ is defined to be the $m_1n_1 \times m_2n_2$ matrix given by
\[A\otimes B=\begin{pmatrix}a_{11}B & \cdots &a_{1 m_2} B\\ 
\vdots &\ddots&\vdots\\
a_{m_11}B & \cdots &a_{m_1 m_2}B\end{pmatrix}.\]
The relations $\lambda (A\otimes B)=(\lambda A)\otimes B = A\otimes (\lambda B)$, where $\lambda \in \R$, are obvious. If $A_1$ and $A_2$ are rectangular matrices of the same dimensions then it is also clear that $(A_1+A_2) \otimes B=A_1\otimes B + A_2\otimes B$ for every rectangular matrix $B$, and likewise if $B_1$ and $B_2$ are rectangular matrices of the same dimensions then $A\otimes (B_1+B_2)=A\otimes B_1 + A\otimes B_2$ for every rectangular matrix $A$. The relation $\|A\otimes B\|=\|A\|\cdot \|B\|$ is satisfied for every pair of rectangular matrices $A$ and $B$, where $\|\cdot\|$ denotes the Euclidean operator norm on matrices. In particular if $u$ and $v$ are column vectors of respective dimensions $d_1$ and $d_2$ then $u\otimes v$ is a column vector of dimension $d_1d_2$ and with norm $\|u\otimes v\|=\|u\|\cdot\|v\|$. If $e_1,\ldots,e_{d_1}$ and $f_1,\ldots,f_{d_2}$ denote the standard bases of $\R^{d_1}$ and $\R^{d_2}$ respectively then it is easily seen that the vectors $e_i \otimes f_j$ make up the standard basis for $\R^{d_1d_2}$, and it is an easily exercise to deduce that every element of $\R^{d_1d_2}$ can be written as the sum of not more than $\min\{d_1, d_2\}$ vectors of the form $u \otimes v$. In particular  every element of $\R^4$ can be written in the form $u\otimes v + \hat{u}\otimes \hat{v}$ for some $u,v,\hat{u},\hat{v} \in \R^2$, a fact which will be applied in \S\ref{se:four} below.

If $A_1$ and $A_2$ are rectangular matrices of the same dimensions, $B_1$ and $B_2$ are rectangular matrices of the same dimensions, and the dimensions of these matrices are such that the products $A_1A_2$ and $B_1B_2$ are well-defined, then the relation $(A_1\otimes B_1)(A_2\otimes B_2)=(A_1A_2)\otimes (B_1B_2)$ is satisfied. If $A$ and $B$ are square matrices then the eigenvalues of $A\otimes B$ are precisely those numbers which may be obtained as the product of a eigenvalue of $A$ with an eigenvalue of $B$ . If $A$ and $B$ are square matrices then it is easily seen that the matrices $A\otimes I$ and $I\otimes B$ commute and that $e^{A\otimes I + I\otimes B}=(e^A \otimes I)(I\otimes e^B)=e^A\otimes e^B$, a fact which will be important in \S\ref{se:four}.

The following result can be obtained as a consequence of Schur's theorem on simultaneous triangularisation. For a proof we direct the reader to \cite[Theorem 2.4.8.1]{HoJo13}.
\begin{proposition}\label{pr:hojo}
Let $A, B \in M_d(\R)$ and suppose that $A$ and $B$ commute. Then every eigenvalue of $A+B$ is equal to the sum of an eigenvalue of $A$ and an eigenvalue of $B$.
\end{proposition}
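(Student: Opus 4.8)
The plan is to deduce the statement from Schur's theorem on simultaneous triangularisation, exactly as the phrasing of the proposition suggests. Since eigenvalues are in general complex, I would first regard $A$ and $B$ as matrices in $M_d(\mathbb{C})$; this alters neither the spectra of $A$, $B$ and $A+B$ nor the hypothesis $AB=BA$, so the passage to the complex field is harmless and will let me triangularise.

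The essential input is that two commuting complex matrices admit a common orthonormal triangularising basis: there exists a unitary $U \in M_d(\mathbb{C})$ for which both $U^*AU$ and $U^*BU$ are upper triangular. I would obtain this either by direct citation of \cite[Theorem 2.4.8.1]{HoJo13} as indicated in the statement, or by recalling the standard induction on $d$. Commuting matrices share an eigenvector: if $Av=\lambda v$ with $v\neq 0$, then $A(Bv)=B(Av)=\lambda(Bv)$, so the $\lambda$-eigenspace of $A$ is $B$-invariant and hence contains an eigenvector of $B$, giving a common eigenvector $w$ of $A$ and $B$. Extending $w/\|w\|$ to an orthonormal basis and passing to the orthogonal complement, on which the compressions of $A$ and $B$ again commute, furnishes the simultaneous triangularisation inductively.

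With such a $U$ in hand the conclusion is immediate. Write $T_A:=U^*AU$ and $T_B:=U^*BU$; both are upper triangular, so their diagonal entries $(T_A)_{11},\ldots,(T_A)_{dd}$ are exactly the eigenvalues of $A$ counted with multiplicity, and likewise the diagonal of $T_B$ lists the eigenvalues of $B$. The matrix $U^*(A+B)U=T_A+T_B$ is again upper triangular and similar to $A+B$, so the eigenvalues of $A+B$ are precisely its diagonal entries $(T_A)_{kk}+(T_B)_{kk}$ for $k=1,\ldots,d$. Each such entry is the sum of the eigenvalue $(T_A)_{kk}$ of $A$ and the eigenvalue $(T_B)_{kk}$ of $B$, which is exactly the assertion.

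There is essentially no obstacle beyond securing the simultaneous triangularisation: once both matrices are upper triangular in a common basis, the result is pure bookkeeping on diagonal entries, and the position-by-position pairing even yields the slightly stronger statement that the spectrum of $A+B$ (with multiplicity) can be listed as $(\mu_k+\nu_k)_{k=1}^d$ for suitable orderings $(\mu_k)$ and $(\nu_k)$ of the spectra of $A$ and $B$. The one point requiring care is precisely that a single unitary works for $A$ and $B$ at the same time; this is where commutativity is used, and it is the sole place the hypothesis $AB=BA$ enters the argument.
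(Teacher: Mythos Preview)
Your proposal is correct and follows precisely the approach the paper indicates: the paper does not give a proof in-text but states that the result is a consequence of Schur's theorem on simultaneous triangularisation and cites \cite[Theorem 2.4.8.1]{HoJo13}. Your argument fills in exactly this deduction, and your remark about where commutativity enters is spot on.
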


\section{Analysis of the first two-dimensional system}

The following result modifies the earlier-mentioned example of Chitour, Gaye and Mason (specifically, \cite[Example 3]{ChGaMa15}) and presents the properties of the modified example at a level of detail which is suitable for later application.
\begin{theorem}\label{th:a-pepper}
Let $A_0, A_1 \in M_2(\R)$ be as defined in the statement of Theorem \ref{th:main}, and let $\mathsf{A}\subset M_2(\R)$ denote their convex hull. Then:
\begin{enumerate}[(i)]
\item\label{it:one}
The set $\mathsf{A}$ is irreducible and marginally stable, and every element of $\A$ is a Hurwitz matrix except for $A_0$.
\item\label{it:two}
The function $\threebar{\cdot}_\A \colon \R^2 \to \R$ defined by
\[\threebar{\begin{pmatrix}v_1 \\ v_2\end{pmatrix}}_\A = \sup_{\tau \geq 0} \left|e^{-\tau} v_1\cos \tau +e^{-\tau}v_2\sin\tau\right|\]
is a Barabanov norm for $\mathsf{A}$, and is the unique such norm up to scalar multiplication. This norm moreover satisfies
\begin{equation}\label{eq:stonks}\threebar{\begin{pmatrix}v_1 \\ v_2\end{pmatrix}}_\A=\threebar{\begin{pmatrix}v_1 \\ 0\end{pmatrix}}_\A  =|v_1|\end{equation}
when $|v_2|\leq |v_1|$.
\item\label{it:three}
Let $\alpha \colon[0,1]\to \R$ be Lebesgue measurable, and suppose that $x \colon [0,\infty) \to \R^2$ is absolutely continuous and satisfies 
\[x'(t)=(1-\alpha(t))A_0x(t)+\alpha(t)A_1x(t)\]
for a.e. $t \geq 0$. Then $x(t)$ converges as $t\to \infty$ to a point on the horizontal axis in $\R^2$, and if this limit is nonzero then $\int_0^\infty \alpha(t)dt<\infty$.
\end{enumerate}

\end{theorem}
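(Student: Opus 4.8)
The plan is to derive everything from two facts: a dissipation identity for the Euclidean norm, which controls the asymptotics of trajectories, and an explicit description of $\threebar{\cdot}_\A$ in terms of the flow of $A_1$. I begin with the elementary parts of (i). Every matrix in $\A$ has the form $A_s:=(1-s)A_0+sA_1=\left(\begin{smallmatrix}-s&s\\-s&-1\end{smallmatrix}\right)$, $s\in[0,1]$, with trace $-1-s$ and determinant $s(1+s)$; hence $A_s$ is Hurwitz for $s\in(0,1]$ while $A_0$ has $0$ as an eigenvalue. Since $A_1$ has eigenvalues $-1\pm i$, it possesses no real eigenvector, so $\{A_0,A_1\}$ (and hence $\A$) leaves no line invariant and $\A$ is irreducible.

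Next I would prove (iii), which drives the rest. Writing the trajectory as $x'=(1-\alpha)A_0x+\alpha A_1 x$ with $\alpha$ measurable and taking values in $[0,1]$, so that $x_1'=\alpha(x_2-x_1)$ and $x_2'=-\alpha x_1-x_2$, a one-line computation gives $\tfrac{d}{dt}\|x(t)\|^2=-2\alpha(t)x_1(t)^2-2x_2(t)^2\le 0$. Thus $\|x(t)\|$ decreases to some $\rho\ge 0$. If $\rho=0$ then $x(t)\to 0$. If $\rho>0$, integrating yields $\int_0^\infty x_2^2<\infty$ and $\int_0^\infty \alpha x_1^2<\infty$; boundedness of $x$ makes $x_2'$ bounded, hence $x_2$ Lipschitz, and a uniformly continuous square-integrable function tends to $0$, so $x_2(t)\to 0$. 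Then $x_1(t)^2=\|x(t)\|^2-x_2(t)^2\to\rho^2$, so eventually $|x_1(t)|>\rho/2$ and $x_1$ keeps a fixed sign, giving $x(t)\to(\pm\rho,0)^{T}$; and $\int_0^\infty\alpha x_1^2<\infty$ together with $x_1^2>\rho^2/4$ eventually forces $\int_0^\infty\alpha<\infty$. The same identity shows $\|x(t)\|\le\|x(0)\|$ along every trajectory, so $\A$ is not exponentially unstable and $\Lambda(\A)\le 0$, while the constant trajectory $(1,0)^{T}$ (switching law $\equiv A_0$) shows $\A$ is not exponentially stable and $\Lambda(\A)\ge 0$; hence $\Lambda(\A)=0$ and $\A$ is marginally stable, completing (i).

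For (ii), note that the first coordinate of $e^{\tau A_1}v$ equals $e^{-\tau}(v_1\cos\tau+v_2\sin\tau)$, so $\threebar{v}_\A=\sup_{\tau\ge 0}|e_1^{T}e^{\tau A_1}v|$; this is a supremum of seminorms, finite ($\le\|v\|$) and positive definite (evaluate at $\tau=0$ and $\tau=\pi/2$), hence a norm. Extremality follows from Proposition~\ref{pr:lyooo}: along $A_1$, $\threebar{e^{tA_1}v}_\A=\sup_{s\ge t}|e_1^{T}e^{sA_1}v|\le\threebar{v}_\A$; along $A_0$, $e^{tA_0}v=(1-e^{-t})(v_1,0)^{T}+e^{-t}v$, so $\threebar{e^{tA_0}v}_\A\le(1-e^{-t})|v_1|+e^{-t}\threebar{v}_\A\le\threebar{v}_\A$ since $|v_1|=|e_1^{T}v|\le\threebar{v}_\A$. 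The identity $\threebar{(v_1,v_2)^{T}}_\A=|v_1|$ for $|v_2|\le|v_1|$ holds because $(v_1,v_2)^{T}$ lies on the segment joining $(v_1,v_1)^{T}$ and $(v_1,-v_1)^{T}$, and $\sup_{\tau\ge0}|e^{-\tau}(\cos\tau\pm\sin\tau)|=1$ (these functions decrease from $1$ on $[0,\pi/4]$ and are bounded by $\sqrt2\,e^{-\tau}<1$ for $\tau\ge\pi/4$). Finally, to see $\threebar{\cdot}_\A$ is Barabanov, given $u$ pick $\tau^{*}$ maximising the continuous, vanishing-at-infinity map $\tau\mapsto|e_1^{T}e^{\tau A_1}u|$ and put $v=e^{\tau^{*}A_1}u$; then $\threebar{v}_\A=\sup_{s\ge\tau^{*}}|e_1^{T}e^{sA_1}u|=|e_1^{T}v|=|v_1|$, and this common value equals $\threebar{u}_\A$. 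The trajectory running $A_1$ on $[0,\tau^{*}]$ and $A_0$ afterwards then has $\threebar{x(t)}_\A$ bounded below by the first-coordinate evaluation and above by extremality, both equal to $\threebar{u}_\A$, so $\threebar{x(t)}_\A$ is constant.

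It remains to prove uniqueness, which is the step I expect to require the most care, since it is where (iii) is indispensable and where one must check that the resulting description of the norm is genuinely norm-independent. Let $\threebar{\cdot}'$ be any Barabanov norm for $\A$; after rescaling assume $\threebar{e_1}'=1$. For any trajectory $x$ with $x(0)=u$, extremality with $\Lambda(\A)=0$ makes $\threebar{x(t)}'$ non-increasing, and by (iii) $x(t)\to(c,0)^{T}$ for some $c$, so $\threebar{u}'\ge\lim_{t}\threebar{x(t)}'=|c|\,\threebar{e_1}'=|c|$; the Barabanov property supplies a trajectory along which $\threebar{x(t)}'\equiv\threebar{u}'$, realising equality. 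Hence $\threebar{u}'=\sup\{|c|:\text{some trajectory from }u\text{ converges to }(c,0)^{T}\}$, a quantity involving only the dynamics. Since $\threebar{\cdot}_\A$ is a Barabanov norm with $\threebar{e_1}_\A=1$, the identical description holds for it, so $\threebar{\cdot}'=\threebar{\cdot}_\A$, i.e. every Barabanov norm is a scalar multiple of $\threebar{\cdot}_\A$.
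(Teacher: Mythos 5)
Your proposal is correct and, for parts (i) and (ii), follows essentially the paper's route: the same closed formula $\threebar{v}_\A=\sup_{\tau\ge 0}|e_1^Te^{\tau A_1}v|$, extremality checked through Proposition \ref{pr:lyooo} (your verification of monotonicity under $e^{tA_0}$, writing $e^{tA_0}v=(1-e^{-t})(v_1,0)^T+e^{-t}v$ and using the trivial bound $|v_1|\le\threebar{v}_\A$, is a cleaner rendering of the paper's unit-ball convexity argument), the identity \eqref{eq:stonks} from the same calculus fact, and the same $A_1$-then-$A_0$ extremal trajectory for the Barabanov property. In (iii) you use the identical dissipation identity $\tfrac{d}{dt}\|x\|^2=-2\alpha x_1^2-2x_2^2$ but organize the conclusion differently: monotone convergence of $\|x(t)\|$ to a limit $\rho$, a Barbalat-type argument ($x_2$ Lipschitz and square-integrable, hence $x_2\to 0$), sign-constancy of $x_1$, and the bound $\int\alpha x_1^2<\infty$ to force $\int\alpha<\infty$ when $\rho>0$; the paper instead splits on whether $\int\alpha$ diverges, uses an integrating-factor estimate for $x_2$ and $\int_0^\infty|x_1'|\le 2C\int_0^\infty\alpha$ — both are complete proofs. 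The genuinely different step is uniqueness: the paper maximizes the ratio of two Barabanov norms, transports the maximizer along an extremal trajectory to the horizontal axis, and then swaps the roles of the two norms; you instead extract from extremality plus (iii) the intrinsic identity $\threebar{u}'=\sup\{|c|\colon\text{some trajectory from } u\text{ converges to }(c,0)^T\}$ for every Barabanov norm normalized by $\threebar{e_1}'=1$, which determines the norm outright. Your version is more direct and yields an explicit dynamical description of the Barabanov norm; the paper's ratio-maximization argument requires no normalization at a distinguished vector and is the template that is reused essentially verbatim in the four-dimensional uniqueness proof of \S\ref{se:four}, where no such explicit formula is available.
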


\begin{proof}
We begin the proof by constructing and investigating the norm defined in \eqref{it:two}, and proceed to \eqref{it:one} and \eqref{it:three} only later. Define a norm on $\R^2$ by
\begin{align*}\threebar{\begin{pmatrix}v_1 \\ v_2\end{pmatrix}}_\A&:=\sup_{\tau \geq 0} \lim_{T\to\infty}\left\|e^{TA_0}e^{\tau A_1}\begin{pmatrix}v_1 \\ v_2\end{pmatrix}\right\|\\
&=\sup_{\tau \geq 0} \lim_{T\to\infty} \left\|\begin{pmatrix}1&0\\0&e^{-T}\end{pmatrix} \begin{pmatrix}e^{-\tau}\cos \tau & e^{-\tau} \sin \tau\\ -e^{-\tau}\sin \tau&e^{-\tau}\cos \tau\end{pmatrix}\begin{pmatrix}v_1\\v_2\end{pmatrix}\right\|\\
&= \sup_{\tau \geq 0} \left|e^{-\tau} v_1\cos \tau +e^{-\tau}v_2\sin\tau\right|\end{align*}
which coincides with the definition given in \eqref{it:two} above. It is clear that the above supremum is finite for all $v_1, v_2 \in \R$ and that $\threebar{\cdot}_\A$ satisfies the axioms of a norm. It is an easy calculus exercise to show that $\sup_{\tau \geq 0}e^{-\tau}(|\cos \tau |+|\sin \tau|)=1$, so if $|v_2|\leq |v_1|$ then
\[|v_1|\leq  \sup_{\tau \geq 0} \left|e^{-\tau} v_1\cos \tau +e^{-\tau}v_2\sin\tau\right| \leq |v_1| \left( \sup_{\tau \geq 0}e^{-\tau}(|\cos \tau |+|\sin \tau|)\right)=|v_1|\]
where the first inequality comes from the trivial observation that the supremum of $| v_1\cos \tau +e^{-\tau}v_2\sin\tau|$ over $\tau\geq 0$ is at least the value of that expression at $\tau=0$. 
In particular we have shown that
\[\threebar{\begin{pmatrix}v_1 \\ v_2\end{pmatrix}}_\A=\threebar{\begin{pmatrix}v_1 \\ 0\end{pmatrix}}_\A=|v_1|  \]
whenever $|v_2|\leq |v_1|$, as was claimed in \eqref{it:two}.

We next demonstrate that $\threebar{\cdot}_\A$ is non-increasing along every trajectory of the linear switching system defined by $\mathsf{A}$. By Proposition \ref{pr:lyooo} it is sufficient to show that $\threebar{e^{t A_i}v}_\A \leq \threebar{v}_\A$ for every $v \in \R^2$, $i=0,1$ and $t \geq 0$. The case $i=1$ is straightforward: if $t \geq0$ and $v \in \R^2$ then 
\[\threebar{e^{t A_1 }v}_\A=\sup_{\tau \geq 0} \lim_{T\to\infty}\left\|e^{TA_0}e^{\tau A_1} e^{tA_1}v\right\| = \sup_{\tau \geq t} \lim_{T\to\infty}\left\|e^{TA_0}e^{\tau A_1}v\right\|\leq \threebar{v}_\A.\]
To treat the case $i=0$ we argue as follows. Suppose that $v =(v_1,v_2)^T\in \R^2$ belongs to the closed unit ball of the norm $\threebar{\cdot}_\A$. We claim that necessarily $|v_1|\leq 1$. Indeed, if $v_1>1$ then for small enough $\varepsilon>0$ the vector $(1-\varepsilon)\cdot (1,0)^T +\varepsilon (v_1, v_2)^T$ has norm $(1-\varepsilon)+\varepsilon v_1>1$ by \eqref{eq:stonks}, but this vector also must belong to the unit ball of $\threebar{\cdot}_\A$ since it is a convex combination of two vectors which are elements of the closed unit ball. This is a contradiction and we conclude that $v_1 \leq 1$. If $v_1<-1$ then a similar contradiction arises by considering the norm of the vector $(1-\varepsilon)\cdot (-1,0)^T +\varepsilon (v_1, v_2)^T$ and we conclude that $|v_1|\leq 1$ as claimed. Since $|v_1|\leq 1$ it follows via \eqref{eq:stonks} that $(v_1,0)^T$ belongs to the closed unit ball. For every $t \geq 0$ the vector $e^{tA_0}v=(v_1, e^{-t}v_2)^T$ is a convex combination of the two vectors $(v_1,v_2)^T$ and $(v_1,0)^T$ both of which belong to the unit ball, and we conclude that $\threebar{e^{tA_0}v}_\A\leq 1=\threebar{v}_\A$ as required. By homogeneity the same result holds for every $v \in \R^2$. The hypotheses of Proposition \ref{pr:lyooo} are therefore satisfied and we conclude that $\threebar{x(t)}_\A$ is non-increasing for every trajectory $x(t)$ of the linear switching system defined by $\mathsf{A}$.

We may now prove \eqref{it:one}. A simple calculation shows that every $A \in \A$ other than $A_0$ has negative trace and positive determinant and is therefore Hurwitz. It is clear that $\mathsf{A}$ is irreducible: if every element of $\A$ preserves a nontrivial proper subspace of $\R^2$ then that subspace must be one-dimensional, but $A_1$ has no real eigenvectors and therefore cannot preserve a one-dimensional real vector space. Since $\threebar{\cdot}_\A$ is non-increasing along trajectories of the linear switching system defined by $\mathsf{A}$ it follows directly that  $\Lambda(\mathsf{A})\leq 0$. On the other hand since $e^{t A_0}u=u$ for all $t \geq 0$ whenever $u \in \R^2$ lies on the horizontal axis, the linear switching system defined by $\A$ admits constant trajectories and it follows that $\Lambda(\mathsf{A})\geq 0$. We conclude that $\Lambda(\mathsf{A})=0$ and that $\threebar{\cdot}_\A$ is an extremal norm for $\mathsf{A}$. In particular we have proved \eqref{it:one}. 

We next claim that $\threebar{\cdot}_\A$ is a Barabanov norm for $\A$. Let $v=(v_1, v_2)^T \in \R^2$ be nonzero, and observe that since clearly
\[\left|e^{-(\tau+\pi)} v_1\cos (\tau+\pi) +e^{-(\tau+\pi)}v_2\sin(\tau+\pi)\right| =e^{-\pi} \left|e^{-\tau} v_1\cos \tau +e^{-\tau}v_2\sin\tau\right| \]
for all $\tau\geq 0$, the supremum
\[\sup_{\tau \geq 0}   \left|e^{-\tau} v_1\cos \tau +e^{-\tau}v_2\sin\tau\right|=\sup_{\tau \geq 0}  \lim_{T\to\infty}\left\|e^{TA_1}e^{\tau A_0}\begin{pmatrix}v_1 \\ v_2\end{pmatrix}\right\|\]
is necessarily attained at some $\tau_0 \in [0,\pi)$. The function $x \colon [0,\infty) \to \R^2$ defined by $x(t)=e^{tA_1}v$ for $0 \leq t \leq \tau_0$ and $x(t)=e^{(t-\tau_0)A_0}e^{\tau_0 A_1}v$ for all $t >\tau_0$ is clearly a trajectory of the linear switching system defined by $\A$ and is easily verified to satisfy $\threebar{x(t)}_\A=\threebar{x(0)}_\A$ for all $t \geq 0$, so $\threebar{\cdot}_\A$ is a Barabanov norm for $\A$ as claimed.

We have now proved every clause of \eqref{it:one} and \eqref{it:two} except for the uniqueness of the Barabanov norm $\threebar{\cdot}_\A$, which we postpone until after proving \eqref{it:three}. Let $x \colon [0,\infty) \to \R^2$ satisfy \eqref{eq:that-equation} and note that since $\threebar{x(t)}_\A\leq \threebar{x(0)}_\A$ for all $t \geq 0$, there exists $C>0$ such that $\|x(t)\|\leq C$ for all $t \geq 0$. Let $x_1(t)$ and $x_2(t)$ denote the first and second co-ordinates of $x(t)$. For a.e. $t \geq 0$ we have
\[x_1'(t)=\alpha(t)(x_2(t)-x_1(t)),\]
\[x_2'(t)=-\alpha(t)x_1(t)-x_2(t).\]
It follows that for a.e. $t \geq 0$
\[2x_1'(t)x(t)+2x_2'(t)x_2(t) =-2\alpha(t)x_1(t)^2 -2x_2(t)^2\leq -2\alpha(t)(x_1(t)^2+x_2(t)^2)\]
and therefore
\[\|x(t)\|^2 \leq \exp \left(-2\int_0^t \alpha(s)ds\right)\|x(0)\|^2\]
for every $t \geq 0$. In particular, if $\int_0^\infty \alpha(t)dt$ is infinite then $\lim_{t \to \infty} x(t)=0$. Suppose instead that $\int_0^\infty \alpha(t)dt<\infty$. Since for a.e. $t \geq 0$ we have
\[x_2'(t)x_2(t) = -\alpha(t)x_1(t)x_2(t)-x_2(t)^2 \leq C^2\alpha(t)-x_2(t)^2\]
it follows that for all $t_2 \geq t_1 \geq 0$
\[e^{2t_2}x_2(t_2)^2 - e^{2t_1}x_2(t_1)^2 = \int_{t_1}^{t_2} 2e^{2s}x_2'(s)x_2(s) + 2e^{2s}x_2(s)^2 ds \leq 2C^2\int_{t_1}^{t_2}e^{2s}\alpha(s)ds\]
and therefore
\begin{align*}\left|x_2(t_2)\right|^2 &= e^{2(t_1-t_2)}x_2(t_1)^2 + 2C^2\int_{t_1}^{t_2}e^{2(s-t_2)}\alpha(s)ds\\
&\leq e^{2(t_1-t_2)}x_2(t_1)^2+2C^2\int_{t_1}^\infty \alpha(s)ds. \end{align*}
Taking the limit as $t_2 \to \infty$ yields
\[\limsup_{t_2 \to \infty} \left|x_2(t_2)\right|^2 \leq 2C^2 \int_{t_1}^\infty \alpha(s)ds\]
for all $t_1 \geq 0$, and since $\int_0^\infty \alpha(s)ds$ converges we conclude that $\lim_{t \to \infty} x_2(t)=0$. On the other hand clearly 
\[\int_0^\infty \left|x_1'(t)\right|dt = \int_0^\infty \alpha(t)\left|x_2(t)-x_1(t)\right| dt \leq 2C\int_0^\infty \alpha(t)dt<\infty\]
and this implies the existence of the limit $\lim_{t\to \infty}x_1(t)$. We have shown that in all cases $\lim_{t\to \infty}x(t)$ exists and lies on the horizontal axis, and if $\lim_{t\to \infty}x(t)$ is nonzero then $\int_0^\infty \alpha(s)ds<\infty$. This proves \eqref{it:three}.

To complete the proof we must establish the uniqueness of $\threebar{\cdot}_\A$ as a Barabanov norm. Suppose that $\threebar{\cdot}_1$ and $\threebar{\cdot}_2$ are both Barabanov norms for $\A$. By continuity and compactness there exists a vector on the Euclidean unit circle in $\R^2$ which attains the maximum value of the ratio $\threebar{\cdot }_1/\threebar{\cdot}_2$ on $\R^2\setminus \{0\}$, so using the Barabanov property of $\threebar{\cdot}_1$ let $x(t)$ be a trajectory such that $\threebar{x(t)}_1=\threebar{x(0)}_1$ for all $t \geq 0$ and such that $x(0)$ maximises  $\threebar{\cdot }_1/\threebar{\cdot}_2$. Since $\threebar{\cdot}_2$ is an extremal norm we have $\threebar{x(t)}_2\leq \threebar{x(0)}_2$ for all $t \geq 0$, so 
\[  \frac{\threebar{x(t)}_1}{\threebar{x(t)}_2} \leq  \frac{\threebar{x(0)}_1}{\threebar{x(0)}_2}\leq \frac{\threebar{x(0)}_1}{\threebar{x(t)}_2}  = \frac{\threebar{x(t)}_1}{\threebar{x(t)}_2}\]
for all $t \geq 0$, where in the first inequality we have used the fact that $x(0)$ maximises $\threebar{\cdot}_1/\threebar{\cdot}_2$. In particular we have $\threebar{x(t)}_1/\threebar{x(t)}_2 =\threebar{x(0)}_1/\threebar{x(0)}_2$ for every $t \geq 0$. By \eqref{it:two} the limit $\lim_{t\to \infty}x(t)$ exists and lies on the horizontal axis, and this limit is clearly nonzero since $\threebar{x(t)}_1=\threebar{x(0)}_1>0$ for every $t \geq 0$. We deduce that the ratio  $\threebar{\cdot}_1/\threebar{\cdot}_2$ attains its maximum on the horizontal axis. But this argument is symmetric with respect to the roles of $\threebar{\cdot}_1$ and $\threebar{\cdot}_2$, so the ratio $\threebar{\cdot}_2/\threebar{\cdot}_1$ \emph{also} attains its maximum on the horizontal axis. The ratio $\threebar{\cdot}_1/\threebar{\cdot}_2$ thus attains its maximum and its minimum at the same place, so its maximum and minimum are equal and it is perforce constant. We have proved the uniqueness of the Barabanov norm for $\A$ and the proof of the theorem is now complete.
\end{proof}

\section{Analysis of the second two-dimensional system}
The following result establishes some useful properties of the systems $\{B_0, B_1\}$ and $\{0, B_0, B_1\}$. It is similar in character to earlier works which also considered marginally stable linear switching systems whose trajectories rotate around the origin (such as \cite{PyRa96,Ra96} and parts of \cite{Bo02,MaLa03}) but the properties of the system which are of interest to us have mostly not been stated explicitly in earlier research. 
\begin{theorem}\label{th:consider-the-lilies}
Let $\lambda$, $B_0$ and $B_1$ be as in the statement of Theorem \ref{th:main}. Let $\mathsf{B}$ denote the convex hull of $\{B_0, B_1\}$ and let $\mathsf{B}_0$ denote the convex hull of $\{0, B_0, B_1\}$. Then: 
\begin{enumerate}[(i)]
\item\label{it:banan}
Both $\mathsf{B}$ and $\mathsf{B}_0$ are irreducible and marginally stable, and every element of $\mathsf{B}_0$ is Hurwitz except for the zero matrix.
\item\label{it:longcat}
There exists a norm $\threebar{\cdot}_{\mathsf{B}}$ on $\R^2$ which is a Barabanov norm for $\mathsf{B}$ and for $\mathsf{B}_0$. Up to scalar multiplication it is the unique extremal norm for $\mathsf{B}$ and is also the unique extremal norm for $\mathsf{B}_0$.
\item\label{it:orly}
Let $\alpha, \beta \colon [0,\infty) \to [0,1]$ be Lebesgue measurable functions such that $0 \leq \alpha(t)+\beta(t)\leq 1$ a.e, and let $y \colon [0,\infty) \to \R^2$ be an absolutely continuous function such that
\[y'(t)=(1-\alpha(t) - \beta(t))B_0y(t) + \beta(t)B_1y(t)\]
a.e. and such that $y(0)\neq 0$. Suppose that $\int_0^\infty (1-\alpha(t))dt$ diverges. Then for every nonzero $w \in \R^2$ there exists an increasing sequence of real numbers $(t_n)_{n=1}^\infty$ diverging to infinity such that $\|y(t_n)\|^{-1}y(t_n)= \|w\|^{-1}w$ for every $n \geq 1$.
\item\label{it:cheezburger}
For every $w \in \R^2$ there exists a Lebesgue measurable function $\beta \colon [0,\infty) \to [0,1]$ such that the solution $y \colon[0,\infty) \to \R^2$ to the initial value problem
\[y'(t)=(1-\beta(t))B_0y(t)+\beta(t)B_1y(t),\qquad y(0)=w\]
is periodic.
\end{enumerate}
\end{theorem}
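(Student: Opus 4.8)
The plan is a phase-plane analysis carried out after reducing to the trace-zero matrices $C_0 = \begin{pmatrix} 0 & -2 \\ \frac12 & 0\end{pmatrix}$, $C_1 = \begin{pmatrix} 0 & -\frac12 \\ 2 & 0\end{pmatrix}$, for which $B_i = C_i - \lambda I$. Since $\Lambda$ is unchanged on passing to the convex hull, $\Lambda(\conv\{C_0,C_1\}) = \lambda$ by definition, hence $\Lambda(\mathsf{B}) = 0$; also $\Lambda(\mathsf{B}_0) \ge \Lambda(\mathsf{B}) = 0$ by part (i) of Proposition \ref{pr:calculo}, while $\Lambda(\mathsf{B}_0) \le 0$ because the norm built in (ii) is non-increasing along $\mathsf{B}_0$-trajectories. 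Irreducibility of $\mathsf{B}$ and $\mathsf{B}_0$ is clear since $B_0, B_1$ have non-real eigenvalues and so preserve no line. A convex combination $sC_0 + tC_1$ with $s,t \ge 0$ has zero trace and positive determinant, so $sB_0 + tB_1 = (sC_0 + tC_1) - (s+t)\lambda I$ has eigenvalues of real part $-(s+t)\lambda$; hence every element of $\mathsf{B}_0$ other than $0$ is Hurwitz once we know $\lambda > 0$, and this holds because $C_0^2 = C_1^2 = -I$, which gives $\|e^{(\pi/2)C_1}e^{(\pi/2)C_0}\| = \|C_1C_0\| = 4$ and hence $\lambda \ge \pi^{-1}\log 4 > 0$ by Corollary \ref{co:eval}. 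Boundedness of all trajectories, and therefore marginal stability, will come from the extremal norm of part (ii).

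I would then pass to polar coordinates $x = r(\cos\theta,\sin\theta)^T$ and record two computations. Along the flow of $C_0$ (respectively $C_1$), the angular velocity at the unit vector $u(\theta) = (\cos\theta,\sin\theta)^T$ is $p(\theta) := 2\sin^2\theta + \frac12\cos^2\theta$ (respectively $q(\theta) := \frac12\sin^2\theta + 2\cos^2\theta$) and the radial logarithmic rate is $-c(\theta)$ (respectively $c(\theta)$), where $c(\theta) := \frac34\sin 2\theta$; one has $p,q \in [\frac12,2]$ and $p+q \equiv \frac52$, and subtracting $\lambda I$ changes none of these angular velocities. Consequently: (a) along every trajectory of $\mathsf{B}$ or $\mathsf{B}_0$ the angle $\theta$ is non-decreasing, and strictly increasing except when the system sits at the matrix $0$; and (b) if the switching law is written $(1-\beta)B_0 + \beta B_1$ then $\frac{d}{d\theta}\log r = \frac{(2\beta-1)c - \lambda}{(1-\beta)p + \beta q}$ is a monotone linear-fractional function of $\beta \in [0,1]$, so it runs through every value between its two endpoint values and attains its maximum $g(\theta) := \max\{\frac{c-\lambda}{q},\frac{-c-\lambda}{p}\}$ at $\beta \in \{0,1\}$.

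Part (iii) is then quick from (a): along $y$ one has $\dot\theta \ge \frac12(1-\alpha)$, so when $\int_0^\infty(1-\alpha)$ diverges the argument of $y$ (a continuous non-decreasing function, since $y$ never vanishes) tends to $+\infty$, whence $y/\|y\|$ runs through every direction along a sequence of times tending to infinity. For part (ii) I would use Proposition \ref{pr:baraban} to obtain an automatically convex Barabanov norm $\threebar{\cdot}_\mathsf{B}$ for $\mathsf{B}$ and write its unit sphere as the radial graph $r = \rho(\theta)$. Extremality says $\frac{d}{d\theta}\log r \le (\log\rho)'(\theta)$ for every admissible $\beta$ at a.e. $\theta$ (every vector field $(1-\beta)B_0+\beta B_1$ points weakly into the unit ball along the sphere), while the Barabanov property says that through each boundary point some switching law is tangent to the sphere, so equality holds there for some $\beta$; with (b) this forces $(\log\rho)'(\theta) = g(\theta)$ a.e. Thus $\rho$ is determined up to a constant, which gives uniqueness of the extremal norm for $\mathsf{B}$; consistency requires $\int_0^{2\pi}g = 0$, which follows from $\Lambda(\mathsf{B}) = 0$, since a negative value would contract every $\mathsf{B}$-trajectory by a fixed factor per revolution (forcing $\Lambda(\mathsf{B}) < 0$) while the trajectory that follows $g$ grows at rate $(\int_0^{2\pi}g)/T_0$ over its revolution time $T_0$ (forcing $\int_0^{2\pi}g \le 0$). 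That trajectory uses $B_1$ where $\frac{c-\lambda}{q} > \frac{-c-\lambda}{p}$ and $B_0$ otherwise — a condition which, since $p+q \equiv \frac52$, is governed by the sign of $\frac{15}{8}\sin 2\theta + \frac32\lambda\cos 2\theta$, hence switches four times per revolution — and since $\int_0^{2\pi}g = 0$ it returns to its starting point after one revolution, so it is periodic. Finally $\threebar{\cdot}_\mathsf{B}$ is a Barabanov norm for $\mathsf{B}_0$ as well: it is non-increasing along $\mathsf{B}$-trajectories and constant along the stationary trajectories generated by $0$, so Proposition \ref{pr:lyooo} makes it extremal for $\mathsf{B}_0$, the $\mathsf{B}$-trajectories from its Barabanov property remain norm-preserving for $\mathsf{B}_0$, and any extremal norm for $\mathsf{B}_0$ is in particular extremal for $\mathsf{B} \subseteq \mathsf{B}_0$, hence a scalar multiple of $\threebar{\cdot}_\mathsf{B}$. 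Part (iv) is now immediate: a nonzero $w$ lies on the sphere $\{\threebar{\cdot}_\mathsf{B} = \threebar{w}_\mathsf{B}\}$, which is exactly the orbit of a periodic extremal trajectory, so a suitable time translate of that trajectory is a periodic solution of $y' = (1-\beta)B_0 y + \beta B_1 y$ with $y(0) = w$ (and $y \equiv 0$ handles $w = 0$).

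The step I expect to be the main obstacle is the identification $(\log\rho)' = g$ in part (ii): making the infinitesimal "points inward / is tangent" dictionary precise in polar coordinates, and handling the finitely many corners of the convex unit sphere, where $\rho$ is not differentiable and the extremal switching law changes.
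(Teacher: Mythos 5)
Your parts (i) and (iii) run along essentially the same lines as the paper: $\Lambda(\B)=0$ from the definition of $\lambda$, the bound $\lambda\geq\pi^{-1}\log 4$ from the product $e^{(\pi/2)C_1}e^{(\pi/2)C_0}$, irreducibility from the non-real spectrum, the trace/determinant argument for Hurwitzness, and the polar-coordinate estimate $\theta'(t)\geq\tfrac12(1-\alpha(t))$. Where you genuinely diverge is in (ii) and (iv). The paper never writes down the equation $(\log\rho)'=g$: it obtains (iv) directly from the Barabanov property of $\threebar{\cdot}_\B$ together with (iii) (a norm-preserving trajectory started at $w$ returns to the ray through $w$, and constancy of the Barabanov norm forces it to return to $w$ itself, after which one extends periodically), and it then obtains uniqueness by comparing an arbitrary extremal norm with $\threebar{\cdot}_\B$ along such a periodic trajectory: both norms are non-increasing and periodic along it, hence constant, so their ratio is constant along an orbit which by (iii) meets every direction. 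Your route instead characterises the unit sphere by the first-order equation $(\log\rho)'=g$, with $g$ the bang-bang maximal selection, and produces the periodic trajectory as the explicit four-switches-per-revolution bang-bang orbit via $\int_0^{2\pi}g\,d\theta=0$. This is more informative -- it identifies the most-unstable switching law and in principle the norm itself -- but costlier: you must know that $\log\rho$ is Lipschitz (hence absolutely continuous, so the fundamental theorem of calculus applies) and deal with the corners of the convex sphere, exactly the obstacle you flag; the paper's softer argument avoids all of this.

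One step of your (ii) needs to be reassembled, because the theorem asserts uniqueness among \emph{extremal} norms, not merely among Barabanov norms. The tangency argument (``through each boundary point some switching law is tangent, so equality holds'') applies only to a Barabanov norm, and even there it is delicate: the norm-preserving trajectory has a merely measurable switching law, so extracting a single constant vector field tangent at a prescribed point requires an additional limiting argument. For a general extremal norm your dictionary yields only the one-sided inequality $(\log\rho)'\geq g$ a.e. The correct assembly, using ingredients you already have, is: $(\log\rho)'\geq g$ a.e., $\log\rho$ is $2\pi$-periodic and absolutely continuous, and $\int_0^{2\pi}g\,d\theta=0$ (your two-sided argument from $\Lambda(\B)=0$), which together force $(\log\rho)'=g$ a.e. and hence determine $\rho$ up to a positive constant; the tangency claim can then be discarded as unnecessary. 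With that rearrangement your argument for (ii), and hence for the whole theorem, goes through.
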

\begin{proof}
A simple direct calculation using Corollary \ref{co:eval} shows that
\[\Lambda(\B)=\Lambda\left(\left\{
\begin{pmatrix} 0&-2\\ \frac{1}{2}&0 \end{pmatrix}, \begin{pmatrix} 0 &-\frac{1}{2}\\ 2&0\end{pmatrix}\right\}\right)-\lambda\]
which is zero by definition. Consequently
\[0=\Lambda(\B) \leq \Lambda(\B_0) \leq \max\left\{\Lambda(\B), \Lambda(\{0\})\right\}=0\]
where we have used parts \eqref{it:obv} and \eqref{it:less-obv} of Proposition \ref{pr:calculo}. Since $B_0$ has non-real eigenvalues it cannot preserve a one-dimensional subspace of $\R^2$ and therefore every subset of $M_2(\R)$ which includes $B_0$ is irreducible. In particular both $\B_0$ and $\B$ are irreducible. It follows from Proposition \ref{pr:baraban} that there exists a Barabanov norm $\threebar{\cdot}_\B$ for $\B$, and since every trajectory of $\mathsf{B}$ is a trajectory of $\mathsf{B}_0$ it follows easily that $\threebar{\cdot}_\B$ is also a Barabanov norm for $\B_0$. The existence of a Barabanov norm for $\B$ and $\B_0$ together with the fact that $\Lambda(\B_0)=\Lambda(\B)=0$ implies that $\B_0$ and $\B$ are marginally stable. Since by a simple calculation
\[\left\|\left(\exp\left(\frac{\pi}{2} \cdot \begin{pmatrix} 0&-2\\ \frac{1}{2}&0 \end{pmatrix}\right)\exp\left(\frac{\pi}{2} \cdot \begin{pmatrix} 0&-\frac{1}{2}\\ 2&0 \end{pmatrix}\right)\right)^n \right\|=4^n\]
for every $n \geq 1$, we have
\[\lambda=\Lambda\left(\left\{
\begin{pmatrix} 0&-2\\ \frac{1}{2}&0 \end{pmatrix}, \begin{pmatrix} 0 &-\frac{1}{2}\\ 2&0\end{pmatrix}\right\}\right)\geq\frac{\log 4}{\pi}\]
by Corollary \ref{co:eval} and in particular $\lambda$ is strictly positive. Clearly every element of $\mathsf{B}$ has trace $-2\lambda<0$ and a simple calculation shows that every element of $\B$ also has positive determinant. In combination these properties imply that every eigenvalue of every element of $\mathsf{B}$ has negative real part, which is to say that every element of $\B$ is Hurwitz. Since every nonzero element of $\B_0$ is a positive scalar multiple of an element of $\B$ it follows that every nonzero element of $\B_0$ is Hurwitz as required. We have proved \eqref{it:banan} and the existence clause of \eqref{it:longcat}.

In order to derive the uniqueness clause of \eqref{it:longcat} we must first prove \eqref{it:orly} and \eqref{it:cheezburger}. Suppose that $\alpha, \beta$ and $y$ are as in \eqref{it:orly} and that $\int_0^\infty (1-\alpha(t))dt$ is divergent, and let $w \in \R^2$ be an arbitrary nonzero vector. Define $r(\phi):=(\cos \phi, \sin \phi)^T \in \R^2$ for every $\phi \in \R$. Write $y(t)$ in polar co-ordinates as $y(t)=n(t)r(\theta(t))$ for all $t \geq 0$, where $n(t):=\|y(t)\|$ for all $t \geq 0$ and where $\theta \colon [0,\infty) \to \R$ is continuous. It is clear that $n$ and $\theta$ are absolutely continuous, so for a.e. $t \geq 0$ we have
\[y'(t)=n'(t)r(\theta(t)) + \theta'(t)n(t)r\left(\theta(t)+\frac{\pi}{2}\right)\]
and also
\begin{align*}y'(t)&=(1-\alpha(t) - \beta(t))B_0 + \beta(t)B_1)y(t)\\
&=n(t) ((
1-\alpha(t) - \beta(t))B_0 + \beta(t)B_1)r(\theta(t)).\end{align*}
Hence for a.e. $t \geq 0$,
\begin{align*}\theta'(t) &= \frac{1}{n(t)} \left\langle y'(t), r\left(\theta(t)+\frac{\pi}{2}\right)\right\rangle\\
&=\left\langle (1-\alpha(t) - \beta(t))B_0 + \beta(t)B_1)r(\theta(t)), r\left(\theta(t)+\frac{\pi}{2}\right)\right\rangle \\
&=\left\langle \begin{pmatrix} -\lambda & -2+\frac{3}{2} \beta(t)+2\alpha(t)\\\frac{1}{2}+\frac{3}{2}\beta(t)-\frac{1}{2}\alpha(t)&-\lambda\end{pmatrix}\begin{pmatrix} \cos \theta(t)\\ \sin \theta(t)\end{pmatrix} ,\begin{pmatrix} -\sin \theta(t)\\ \cos \theta(t)\end{pmatrix}\right\rangle\\
 &=\left(2-\frac{3}{2}\beta(t)-2\alpha(t)\right) \sin^2\theta(t) +  \left(\frac{1}{2}+\frac{3}{2}\beta(t)-\frac{1}{2}\alpha(t)\right)\cos^2\theta(t)\\
&=\frac{1-\alpha(t)}{2} + \frac{3}{2}\left(1-\alpha(t) - \beta(t)\right)\sin^2\theta(t)+\frac{3}{2}\beta(t)\cos^2\theta(t)\\
&\geq\frac{1-\alpha(t)}{2}\geq 0.\end{align*}
In particular $\theta$ is non-decreasing with respect to $t$. If $\theta(t)$ increased to a finite limit as $t \to \infty$ then the integral $\int_0^\infty \theta'(t)dt$ would be finite, implying the finiteness of the integral $\int_0^\infty (1-\alpha(t))dt$ and contradicting the hypotheses of the theorem. We conclude that instead $\lim_{t\to \infty} \theta(t)=+\infty$, and it follows that there exists an increasing sequence of positive real numbers $(t_n)_{n=1}^\infty$ diverging to infinity such that $r(\theta(t_n))=\|w\|^{-1}w$  for every $n \geq 1$. Clearly this implies $\|y(t_n)\|^{-1}y(t_n)=\|w\|^{-1}w$  for every $n \geq 1$ as required to prove \eqref{it:orly}.

We now prove \eqref{it:cheezburger}. If $w \in \R^2$ is the zero vector then every trajectory $y$ such that $y(t)=0$ is constant, so we suppose that $w \in \R^2$ is nonzero. Since $\threebar{\cdot}_\B$ is a Barabanov norm for $\B$ there exist a measurable function $\hat\beta \colon [0,\infty) \to \R$ and absolutely continuous function $\hat y \colon [0,\infty) \to \R^2$ such that 
\[\hat y'(t)=(1-\hat\beta(t))B_0\hat y(t)+\hat\beta(t)B_1\hat y(t),\qquad\hat y(0)=w\]
for a.e. $t \geq 0$ and such that additionally $\threebar{\hat y(t)}_\B=\threebar{\hat y(0)}_\B$ for all $t \geq 0$. It follows from \eqref{it:orly} that there exists $T>0$ such that $\|\hat y(T)\|^{-1}\hat y(T)=\|w\|^{-1}w=\|\hat y(0)\|^{-1}\hat  y(0)$. Consequently
\[\frac{\threebar{\hat y(T)}_\B}{\|\hat y(T)\|} =\frac{\threebar{\hat y(0)}_\B}{\|\hat y(0)\|}\]
and since $\threebar{\hat y(T)}_\B=\threebar{\hat y(0)}_\B$ this implies $\|\hat y(0)\|=\|\hat y(T)\|$. In particular we have $\hat y(T)=\hat y(0)$. Now define $\beta \colon [0,\infty) \to [0,1]$ to be periodic with period $T$ and such that $\beta(t)=\hat\beta(t)$ for all $t \in [0,T]$, and similarly let $y \colon [0,\infty) \to \R^2$ be periodic with period $T$ and such that $y(t)=\hat{y}(t)$ for all $t \in [0,T]$. It is clear that $y$ is absolutely continuous and periodic and solves an initial value problem of the type required for it to be a trajectory of the linear switching system defined by $\B$. This proves \eqref{it:cheezburger}.

We now prove that $\threebar{\cdot}_\B$ is the unique extremal norm for $\B$ up to scalar multiplication, which implies that it is also the unique extremal norm for $\B_0$. Suppose that $\threebar{\cdot}$ is also an extremal norm for $\B$. By \eqref{it:cheezburger} we may choose a periodic trajectory $y$ of the linear switching system defined by $\B$, where $y(0)$ is an arbitrary nonzero vector. Since $\threebar{\cdot}$ and $\threebar{\cdot}_\B$ are extremal norms for $\B$, the functions  $t \mapsto \threebar{y(t)}$ and $t \mapsto \threebar{y(t)}_\B$ are non-increasing; but by the periodicity of $y$, these two functions are also periodic. They are therefore constant, so the ratio $\threebar{y(t)}_\B/\threebar{y(t)}$ is constant with respect to $t \geq 0$. Now if $w \in \R^2$ is any nonzero vector, by \eqref{it:orly} there exists $t_0\geq0$ such that $\|y(t_0)\|^{-1}y(t_0)=\|w\|^{-1}w$. In particular 
\[\frac{\threebar{w}_\B}{\threebar{w}} =\frac{\threebar{\|w\|^{-1}w}_\B}{\threebar{\|w\|^{-1}w}} =\frac{\threebar{\|y(t_0)\|^{-1}y(t_0)}_\B}{\threebar{\|y(t_0)\|^{-1}y(t_0)}}=\frac{\threebar{y(t_0)}_\B}{\threebar{y(t_0)}}=\frac{\threebar{y(0)}_\B}{\threebar{y(0)}}\]
and since the nonzero vector $w \in \R^2$ was arbitrary, the ratio of $\threebar{\cdot}_\B$ to $\threebar{\cdot}$ is constant on $\R^2\setminus\{0\}$. Thus $\threebar{\cdot}_\B$ is a scalar multiple of $\threebar{\cdot}$ as required. The proof of the theorem is complete.
\end{proof}

\section{Proof of Theorem \ref{th:main}}\label{se:four}

\subsection{Irreducibility} We first prove that $\mathsf{X}$ is irreducible. Let $\mathscr{X}\subseteq M_4(\R)$ denote the algebra generated by $\mathsf{X}$, i.e. the linear span of the set of all finite nonempty products of elements of $\mathsf{X}$. If $V$ is a vector subspace of $\R^4$ which is preserved by every element of $\mathsf{X}$ then it is clear that it is also preserved by every element of $\mathscr{X}$, so to show that $\mathsf{X}$ is irreducible it is sufficient to show that $\mathscr{X}$ is irreducible. For each $i,j$ in the range $1 \leq i,j \leq 4$ let $E_{ij}$ denote the $4\times 4$ matrix with $1$ in position $(i,j)$ and all other entries zero. We will show that all of the matrices $E_{ij}$ are elements of $\mathscr{X}$, from which it follows that $\mathscr{X}=M_4(\R)$ and hence that $\mathsf{X}$ is irreducible.

The matrix
\[Z_1:=\frac{2}{3}\left(X_1-X_0\right)=\begin{pmatrix} 
0&1&0&0\\ 
1&0&0&0\\ 
0&0&0&1\\ 
0&0&1&0 \end{pmatrix}\]
is an element of $\mathscr{X}$, and therefore $\mathscr{X}$ also contains the identity matrix $I=Z_1^2$. It consequently contains the matrices
\[Z_2:=X_0+\lambda I=\begin{pmatrix} 
0&-2&0&0\\ 
\frac{1}{2}&0&0&0\\ 
0&0&-1&-2\\ 
0&0&\frac{1}{2}&-1 \end{pmatrix}, \qquad Z_3:=X_1+\lambda I=\begin{pmatrix} 
0&-\frac{1}{2}&0&0\\ 
2&0&0&0\\ 
0&0&-1&-\frac{1}{2}\\ 
0&0&2&-1 \end{pmatrix}\]
and their products
\[Z_4:=Z_2Z_3=\begin{pmatrix} 
-4&0&0&0\\ 
0&-\frac{1}{4}&0&0\\ 
0&0&-3&\frac{5}{2}\\ 
0&0&-\frac{5}{2}&\frac{3}{4} \end{pmatrix}, \qquad Z_5:=Z_3Z_2=\begin{pmatrix} 
-\frac{1}{4}&0&0&0\\ 
0&-4&0&0\\ 
0&0&\frac{3}{4}&\frac{5}{2}\\ 
0&0&-\frac{5}{2}&-3\end{pmatrix}.\]
The matrices
\[\begin{pmatrix} \frac{3}{4}&\frac{5}{2}\\ -\frac{5}{2}&-3\end{pmatrix}, \qquad \begin{pmatrix} -3&\frac{5}{2}\\ -\frac{5}{2}&\frac{3}{4}\end{pmatrix}\]
both have determinant $4$ and trace $-\frac{9}{4}$, so each has a conjugate pair of non-real eigenvalues with absolute value $2$. It follows that $-4$ is the unique eigenvalue of $Z_4$ with maximal modulus and that the same holds for $Z_5$. Consequently 
\[E_{11}=\lim_{n \to \infty} \left(-\frac{1}{4} Z_4\right)^n, \qquad E_{22}=\lim_{n \to \infty} \left(-\frac{1}{4} Z_5\right)^n\]
and the two matrices $E_{11}, E_{22}$ are elements of $\mathscr{X}$.  Since
\[Z_6:=Z_5-Z_4= \begin{pmatrix} 
\frac{15}{4}&0&0&0\\ 
0&-\frac{15}{4}&0&0\\ 
0&0&\frac{15}{4}&0\\ 
0&0&0&-\frac{15}{4}\end{pmatrix}\]
is an element of $\mathscr{X}$ it follows that so too is the matrix
\[Z_7:=\frac{4}{15} Z_6-E_{11}+E_{22} = \begin{pmatrix} 
0&0&0&0\\ 
0&0&0&0\\ 
0&0&1&0\\ 
0&0&0&-1\end{pmatrix}.\]
Consequently $E_{33}=\frac{1}{2}(Z_7^2+Z_7)\in \mathscr{X}$ and $E_{44}=\frac{1}{2}(Z_7^2-Z_7) \in \mathscr{X}$. We conclude that $\mathscr{X}$ contains the four matrices $E_{ii}$ for $i=1,2,3,4$. Now since
\[Z_8:= X_2+Z_1X_2=\begin{pmatrix}
-1&-1&1&1\\ 
-1&-1&1&1\\ 
-1&-1&-1&-1\\
-1&-1&-1&-1 \end{pmatrix}\]
is an element of $\mathscr{X}$, we note that for every $i,j=1,2,3,4$ we have
\[E_{ii}Z_8E_{jj} \in \{E_{ij}, -E_{ij}\}\]
and therefore $E_{ij} \in \mathscr{X}$ for every $i,j=1,2,3,4$. We deduce that $\mathscr{X}=M_4(\R)$ as claimed and we have proved that $\mathsf{X}$ is irreducible.

\subsection{Stability properties} We next show that $\mathsf{X}$ is marginally stable and that all of its elements are Hurwitz matrices. For every $k \geq 1$, $i_1,\ldots,i_k \in \{0,1\}$ and $t_1,\ldots,t_k>0$ we have 
\[\left\|e^{t_k X_{i_k}}\cdots e^{t_1 X_{i_1}}\right\| = \left\|e^{(t_1+\cdots+ t_k) A_0} \otimes \left(e^{t_k B_{i_k}}\cdots e^{t_1 B_{i_1}}\right)\right\| =\left\|e^{t_k B_{i_k}}\cdots e^{t_1 B_{i_1}}\right\|\]
where we have used the fact that $\|e^{tA_0}\|=1$ for every $t \geq 0$. It follows directly using Corollary \ref{co:eval} that $\Lambda(\mathsf{X}) \geq \Lambda(\B)$. On the other hand, since clearly
\[\mathsf{X} \subset \left\{A \otimes I + I \otimes B \colon A \in \mathsf{A}\text{ and }B \in \mathsf{B}_0\right\}\]
and since matrices of the form $A\otimes I$ and $I\otimes B$ commute, it follows using Proposition  \ref{pr:calculo}\eqref{it:even-less-obv} that $\Lambda(\mathsf{X}) \leq \Lambda(\mathsf{A})+\Lambda(\mathsf{B}_0)$. Combining these inequalities with Theorem \ref{th:a-pepper}\eqref{it:one} and Theorem \ref{th:consider-the-lilies}\eqref{it:banan} shows that $\Lambda(\mathsf{X})=0$. Since $\mathsf{X}$ is irreducible it admits a Barabanov norm by Proposition \ref{pr:baraban} and this implies that it is marginally stable.

Now let us show that every $X \in \mathsf{X}$ is Hurwitz. If $X\in \mathsf{X}$ is a convex combination of $X_0$ and $X_1$ only, then it has the form $A_0\otimes I + I \otimes B$ for some  $B \in \mathsf{B}$. By Theorem \ref{th:consider-the-lilies}\eqref{it:banan} the matrix $B$ is Hurwitz, and it is clear by inspection that every eigenvalue of $A_0$ has non-positive real part. Every eigenvalue of $A_0 \otimes I$ is an eigenvalue of $A_0$ and every eigenvalue of $I\otimes B$ is an eigenvalue of $B$, and the two matrices $A_0 \otimes I$ and $I\otimes B$ commute, so it follows by Proposition \ref{pr:hojo} that every eigenvalue of $X$ is the sum of a number with non-positive real part and a number with negative real part. Hence in this case $X$ is Hurwitz. If on the other hand $X$ is a convex combination of $X_0$, $X_1$ and $X_2$ with nonzero contribution from $X_2$, then it may be written in the form $A\otimes I + I \otimes B$ where $A \in \A$ is not equal to $A_0$ and where $B\in \B_0$. By Theorem \ref{th:a-pepper}\eqref{it:one} the matrix $A$ is Hurwitz, and by Theorem \ref{th:consider-the-lilies}\eqref{it:banan} the matrix $B$ is either Hurwitz or zero. It follows that $A\otimes I$ is Hurwitz and that $I\otimes B$ is either Hurwitz or zero, and we may again apply Proposition \ref{pr:hojo} to deduce that $X$ is Hurwitz as required. We have proved that every element of $\mathsf{X}$ is Hurwitz.

\subsection{Uniqueness of the Barabanov norm} Since $\mathsf{X}$ is irreducible it admits at least one Barabanov norm; we now wish to show that this norm is unique up to scalar multiplication. Our strategy will be as follows. Let $e_1, e_2 \in \R^2$ denote the standard basis vectors. Given any two Barabanov norms $\threebar{\cdot}_1$ and $\threebar{\cdot}_2$ for $\mathsf{X}$, we will show that the ratio $\threebar{\cdot}_1/\threebar{\cdot}_2$ attains its maximum at the vector $e_1 \otimes e_1 \in \R^4$. Since this result is equally applicable when the roles of $\threebar{\cdot}_1$ and $\threebar{\cdot}_2$ are interchanged, it follows that $\threebar{\cdot}_2/\threebar{\cdot}_1$ must attain its maximum at the same place; but then the maximum and minimum of the ratio $\threebar{\cdot}_1/\threebar{\cdot}_2$ are attained at the same location, which implies that the ratio of the two norms is constant as desired.
Let us therefore fix two Barabanov norms $\threebar{\cdot}_1$ and $\threebar{\cdot}_2$ for $\mathsf{X}$. We proceed in two stages: we first show that  $\threebar{\cdot}_1/\threebar{\cdot}_2$ is maximised at a vector of the form $e_1 \otimes v$, and then subsequently deduce that it is maximised at $e_1 \otimes e_1$. 

By homogeneity, every value taken by $\threebar{\cdot}_1/\threebar{\cdot}_2$ at a nonzero vector is also taken at a Euclidean unit vector, so by compactness this ratio attains its maximum at some nonzero vector in $\R^4$. Every element of $\R^4$ may be written in the form $u\otimes v + \hat{u}\otimes \hat{v}$ for some $u, v, \hat{u}, \hat{v}\in \R^2$, so choose such vectors $u, v, \hat{u}, \hat{v}$ with $u$ and $v$ nonzero such that $\threebar{\cdot}_1/\threebar{\cdot}_2$ is maximised at $u\otimes v + \hat{u}\otimes \hat{v}$. Since $\threebar{\cdot}_1$ is a Barabanov norm for $\mathsf{X}$ there exists a trajectory $z \colon [0,\infty) \to \R^4$ of the switched system defined by $\mathsf{X}$ such that $z(0)=u\otimes v + \hat{u}\otimes \hat{v}$ and $\threebar{z(t)}_1=\threebar{z(0)}_1$ for all $t \geq 0$. Let $\alpha, \beta \colon [0,\infty) \to [0,1]$ be measurable functions such that $0 \leq \alpha + \beta \leq 1$ and such that 
\[z'(t)= (1-\alpha(t) - \beta(t))X_0z(t) + \beta(t)X_1z(t)+\alpha(t) X_2 z(t)\]
for a.e. $t \geq 0$. Now let $x, y, \hat{x}, \hat{y} \colon [0,\infty) \to \R^2$ solve the initial value problems
\[x'(t) = (1-\alpha(t))A_0x(t)+\alpha(t)A_1x(t),\qquad x(0)=u,\]
\[\hat{x}'(t) = (1-\alpha(t))A_0\hat{x}(t)+\alpha(t)A_1\hat{x}(t),\qquad \hat{x}(0)=\hat{u},\]
\[y'(t) =(1-\alpha(t) - \beta(t))B_0y(t) + \beta(t)B_1y(t),\qquad y(0)=v,\]
\[\hat{y}'(t) =(1-\alpha(t) - \beta(t))B_0\hat{y}(t) + \beta(t)B_1\hat{y}(t),\qquad \hat{y}(0)=\hat{v}.\]
Since
\[z(0)=x(0)\otimes y(0)+ \hat{x}(0)\otimes \hat{y}(0)\]
and for a.e. $t \geq 0$
\[z'(t)=x'(t)\otimes y(t)+x(t)\otimes y'(t) + \hat{x}'(t)\otimes \hat{y}(t)+\hat{x}(y)\otimes \hat{y}'(t)\]
it follows that $z(t)$ and $x(t)\otimes y(t) + \hat{x}(t)\otimes \hat{y}(t)$ solve the same initial value problem and are consequently identical. Moreover $x$ and $\hat{x}$ are trajectories of the linear switched system defined by $\mathsf{A}$, and $y$ and $\hat{y}$ are trajectories of the linear switching system defined by $\B_0$. By Theorem \ref{th:a-pepper}\eqref{it:three} both $x(t)$ and $\hat{x}(t)$ converge as $t \to \infty$ to vectors on the horizontal axis in $\R^2$, so we have $\lim_{t \to \infty} x(t)= \gamma e_1$ and $\lim_{t \to \infty} x(t)= \hat\gamma e_1$ for some real numbers $\gamma, \hat\gamma$, say. Since $\B_0$ is marginally stable the trajectories $y$ and $\hat{y}$ are bounded, so we may choose an increasing sequence $(t_n)_{n=1}^\infty$ which diverges to infinity such that the limits $w:=\lim_{n \to \infty} y(t_n)$ and $\hat{w}:=\lim_{n \to \infty} \hat{y}(t_n)$ exist. In particular we have
\begin{align*}\lim_{n \to \infty} z(t_n)=\lim_{n \to \infty} x(t_n)\otimes y(t_n) + \hat{x}(t_n)\otimes \hat{y}(t_n)&= (\gamma e_1) \otimes  w + (\hat\gamma e_1)\otimes \hat{w}\\&=e_1 \otimes (\gamma w+\hat \gamma \hat w).\end{align*}
By definition $\threebar{z(t_n)}_1=\threebar{z(0)}_1$ for all $n \geq 1$, so $\threebar{e_1 \otimes (\gamma w+\hat \gamma \hat w)}_1=\threebar{z(0)}_1>0$ and the limit vector $e_1 \otimes (\gamma w+\hat \gamma \hat w)$ is nonzero. Since $\threebar{\cdot}_2$ is an extremal norm for $\mathsf{X}$ we have $\threebar{z(t_n)}_2\leq \threebar{z(0)}_2$ for all $n \geq 1$, so for all $n \geq 1$
\[\frac{\threebar{z(0)}_1}{\threebar{z(0)}_2} = \frac{\threebar{z(t_n)}_1}{\threebar{z(0)}_2} \leq  \frac{\threebar{z(t_n)}_1}{\threebar{z(t_n)}_2} \leq \frac{\threebar{z(0)}_1}{\threebar{z(0)}_2} \]
where the final inequality uses the fact that the ratio $\threebar{\cdot}_1/\threebar{\cdot}_2$ is maximised at $z(0)$. Taking the limit as $n \to \infty$ yields
\[ \frac{\threebar{z(0)}_1}{\threebar{z(0)}_2} = \frac{\threebar{e_1 \otimes (\gamma w+\hat \gamma \hat w)}_1}{\threebar{e_1 \otimes (\gamma w+\hat \gamma \hat w)}_2}\]
and we have shown that $\threebar{\cdot}_1/\threebar{\cdot}_2$ is maximised at a vector of the form claimed.

Now let $e_1 \otimes v$ be a vector at which $\threebar{\cdot}_1/\threebar{\cdot}_2$ is maximised, and let us show that $\threebar{\cdot}_1/\threebar{\cdot}_2$ is also maximised at $e_1 \otimes e_1$. Similarly to before, let $z \colon [0,\infty) \to \R^4$ be a trajectory of the switched system defined by $\mathsf{X}$ such that $z(0)=e_1 \otimes v$ and $\threebar{z(t)}_1=\threebar{z(0)}_1$ for all $t \geq 0$. Let $\alpha, \beta \colon [0,\infty) \to [0,1]$ be measurable functions such that $0 \leq \alpha + \beta \leq 1$ and such that 
\[z'(t)= (1-\alpha(t) - \beta(t))X_0z(t) + \beta(t)X_1z(t)+\alpha(t) X_2 z(t)\]
for a.e. $t \geq 0$, and let $x, y \colon [0,\infty) \to \R^2$ solve the initial value problems
\[x'(t) = (1-\alpha(t))A_0x(t)+\alpha(t)A_1x(t),\qquad x(0)=e_1,\]
\[y'(t) =(1-\alpha(t) - \beta(t))B_0y(t) + \beta(t)B_1y(t),\qquad y(0)=v.\]
Once more we have $z(t)=x(t)\otimes y(t)$ for all $t \geq 0$ since both functions solve the same initial value problem. Since $\threebar{x(t)\otimes y(t)}_1=\threebar{z(0)}_1>0$ for all $t \geq 0$ we have $\liminf_{t \to \infty} \|x(t)\otimes y(t)\|>0$, and since $y$ is a trajectory of the marginally stable linear switching system defined by $\B_0$, we have $\limsup_{t \to \infty} \|y(t)\|<\infty$. It follows that
\[\liminf_{t \to \infty} \|x(t)\| = \liminf_{t \to \infty} \frac{\|x(t) \otimes y(t)\|}{\|y(t)\|} >0\]
and in particular $x(t)$ does not converge to zero in the limit as $t \to \infty$. It follows by Theorem \ref{th:a-pepper} that $\lim_{t \to \infty} x(t)=\gamma_1 \cdot e_1$ for some nonzero real number $\gamma_1$ and that the integral $\int_0^\infty \alpha(t)dt$ is convergent. The integral $\int_0^\infty (1-\alpha(t))dt$ is therefore divergent, so by Theorem \ref{th:consider-the-lilies} there exists an increasing sequence $(t_n)_{n=1}^\infty$ which diverges to infinity such that $\|y(t_n)\|^{-1}y(t_n)=e_1$ for every $n \geq 1$. By passing to a subsequence we may assume without loss of generality that $\lim_{n \to \infty} y(t_n)$ exists, and we write this limit as $\gamma_2 \cdot e_1$. We therefore have $\lim_{n \to \infty} z(t_n) = \lim_{n \to \infty} x(t_n)\otimes y(t_n)=(\gamma_1 e_1)\otimes (\gamma_2 e_1)=\gamma (e_1 \otimes e_1)$, say. Since $\threebar{z(t_n)}_1=\threebar{z(0)}_1>0$ for all $n \geq 1$ it follows that $\gamma$ is nonzero. For every $n \geq 1$ we have
\[\frac{\threebar{z(0)}_1}{\threebar{z(0)}_2} = \frac{\threebar{z(t_n)}_1}{\threebar{z(0)}_2} \leq  \frac{\threebar{z(t_n)}_1}{\threebar{z(t_n)}_2} \leq \frac{\threebar{z(0)}_1}{\threebar{z(0)}_2} \]
using the fact that $\threebar{z(t_n)}_1=\threebar{z(0)}_1$ for all $n \geq 1$, the fact that $\threebar{z(t_n)}_2 \leq \threebar{z(0)}_2$ for all $n \geq 1$, and the fact that $\threebar{\cdot}_1/\threebar{\cdot}_2$  is maximised at $z(0)$. Taking the limit as $n \to \infty$ yields 
\[ \frac{\threebar{z(0)}_1}{\threebar{z(0)}_2} = \frac{\threebar{\gamma (e_1 \otimes e_1)}_1}{\threebar{\gamma (e_1 \otimes e_1)}_2}=\frac{\threebar{e_1 \otimes e_1}_1}{\threebar{e_1 \otimes e_1}_2} \]
and we have shown that $\threebar{\cdot}_1/\threebar{\cdot}_2$ is maximised at the vector $e_1 \otimes e_1$. 

Combining the above steps, we have shown that if $\threebar{\cdot}_1$ and $\threebar{\cdot }_2$ are any two Barabanov norms for $\mathsf{X}$ then the ratio $\threebar{\cdot}_1/\threebar{\cdot}_2$ is maximised at $e_1 \otimes e_1$. Since this applies equally with the roles of  $\threebar{\cdot}_1$ and $\threebar{\cdot }_2$ interchanged, the ratio $\threebar{\cdot}_2/\threebar{\cdot}_1$ is maximised at the same vector. The ratio $\threebar{\cdot}_1/\threebar{\cdot}_2$ is thus both maximised and minimised at the same point, and the ratio is consequently constant on nonzero elements of $\R^4$. We have proved the uniqueness of the Barabanov norm for $\mathsf{X}$ up to scalar multiplication.

\subsection{Failure of strict convexity for the Barabanov norm}  Let $\threebar{\cdot}$ be a Barabanov norm for $\mathsf{X}$. It remains to show that $\threebar{\cdot}$ is not strictly convex. Let $u, v \in \R^2$ be nonzero vectors,
\[u=\begin{pmatrix} u_1 \\ u_2\end{pmatrix},\qquad v=\begin{pmatrix}v_1\\v_2\end{pmatrix},\]
where $|u_2| \leq |u_1|$. We claim that
\begin{equation}\label{eq:bonanza}\threebar{\begin{pmatrix} u_1 \\ u_2\end{pmatrix}\otimes \begin{pmatrix} v_1 \\ v_2\end{pmatrix}} =\threebar{\begin{pmatrix} u_1 \\ 0\end{pmatrix}\otimes \begin{pmatrix} v_1 \\ v_2\end{pmatrix}}.\end{equation}
Since $u_2 \in [-u_1, u_1]$ can be chosen arbitrarily when the other variables $u_1, v_1, v_2$ are fixed, this implies that $\threebar{\cdot}$ is constant on a line segment  and consequently is not strictly convex. 

We approach \eqref{eq:bonanza} by proving the two directions of inequality separately. We begin by showing that the left-hand side of \eqref{eq:bonanza} is greater than or equal to the right-hand side. In this we require only the fact that $\threebar{\cdot}$ is an extremal norm for $\mathsf{X}$. By Theorem \ref{th:consider-the-lilies}\eqref{it:cheezburger} there exist a measurable function $\beta \colon [0,\infty) \to [0,1]$ and an absolutely continuous function $y \colon [0,\infty) \to \R^2$ such that
\[y'(t)=(1-\beta(t))B_0y(t)+\beta(t)B_1y(t)\]
a.e, such that $y(0)=v$, and such that $y$ is periodic with some period $\tau>0$, say. Let $z \colon [0,\infty) \to \R^4$ be the trajectory of $\mathsf{X}$ which solves
\[z'(t) = (1-\beta(t))X_0z(t)+\beta(t)X_1z(t),\qquad z(0)=u \otimes v.\]
Then we have $z(t)=(e^{tA_0}x(0)) \otimes y(t)$ for all $t \geq 0$, and therefore using the periodicity of $y$
\[\lim_{n \to \infty} z(n\tau ) = \lim_{n \to \infty} \left(e^{n\tau A_0}x(0)\right) \otimes y(0) =\begin{pmatrix} u_1 \\ 0\end{pmatrix}\otimes \begin{pmatrix} v_1 \\ v_2\end{pmatrix}.\]
Since $\threebar{\cdot}$ is an extremal norm for $\mathsf{X}$ it follows that
\[\threebar{\begin{pmatrix} u_1 \\ 0\end{pmatrix}\otimes \begin{pmatrix} v_1 \\ v_2\end{pmatrix}} = \lim_{n\to \infty} \threebar{z(n\tau)} \leq \threebar{z(0)} = \threebar{\begin{pmatrix} u_1 \\ u_2\end{pmatrix}\otimes \begin{pmatrix} v_1 \\ v_2\end{pmatrix} }\]
and this gives one direction of inequality in \eqref{eq:bonanza}.

We now prove the other direction of inequality. Using the fact that $\threebar{\cdot}$ is a Barabanov norm for $\mathsf{X}$, let $z \colon [0,\infty) \to \R^4$ be a trajectory of the linear switched system defined by $\mathsf{X}$ such that $z(0)=u \otimes v$ and $\threebar{z(t)}=\threebar{z(0)}$ for all $t \geq 0$, 
and let $\alpha, \beta \colon [0,\infty) \to [0,1]$ be measurable functions such that $0 \leq \alpha + \beta \leq 1$ and such that 
\[z'(t)= (1-\alpha(t) - \beta(t))X_0z(t) + \beta(t)X_1z(t)+\alpha(t) X_2 z(t)\]
for a.e. $t \geq 0$. Let $x, y \colon [0,\infty) \to \R^2$ solve the initial value problems
\[x'(t) = (1-\alpha(t))A_0x(t)+\alpha(t)A_1x(t),\qquad x(0)=u,\]
\[y'(t) =(1-\alpha(t) - \beta(t))B_0y(t) + \beta(t)B_1y(t),\qquad y(0)=v,\]
and observe as before that $z(t)=x(t)\otimes y(t)$ for all $t \geq 0$, that $x$ is a trajectory of the linear switching system defined by $\A$, and that $y$ is a trajectory of the linear switching system defined by $\B_0$. Let $\threebar{\cdot}_\A$ and $\threebar{\cdot}_\B$ be the norms on $\R^2$ which were constructed in Theorems \ref{th:a-pepper} and \ref{th:consider-the-lilies} respectively. Since $\threebar{z(t)}=\threebar{z(0)}> 0$ for all $t \geq 0$, and since $y(t)$ is bounded with respect to $t$, it is impossible that $\lim_{t \to \infty}x(t)=0$ and we deduce using Theorem \ref{th:a-pepper}\eqref{it:three} that necessarily $\int_0^\infty \alpha(t)dt<\infty$. By Theorem \ref{th:consider-the-lilies}\eqref{it:orly} there consequently exists a sequence of positive real numbers $(t_n)$ diverging to infinity such that $\|y(t_n)\|^{-1}y(t_n) =\|y(0)\|^{-1}y(0)$ for every $n \geq 1$, so we have $y(t_n) =(\threebar{y(t_n)}_{\B}/\threebar{y(0)}_\B)y(0)$ for all $n \geq 1$. Since $y$ is a trajectory of the linear switching system defined by $\B_0$, and since $\threebar{\cdot}_\B$ is an extremal norm for $\mathsf{B}_0$, the sequence of values $\threebar{y(t_n)}_\B$ is non-increasing. It follows that
\[\lim_{n \to \infty} y(t_n) = \left(\inf_{n \geq 1} \frac{\threebar{y(t_n)}_\B}{\threebar{y(0)}_\B}\right) \cdot y(0) = \gamma_1 \cdot y(0) = \gamma_1 \cdot\begin{pmatrix}v_1\\v_2\end{pmatrix},\]
say, where $0\leq \gamma_1\leq 1$. On the other hand since $x(t)$ converges as $t \to \infty$ to a vector on the horizontal axis in $\R^2$, we have
\[\lim_{t\to \infty} x(t)=\gamma_2 \cdot \begin{pmatrix}u_1 \\ 0\end{pmatrix},\]
say, where 
\[|\gamma_2|\cdot\threebar{ \begin{pmatrix}u_1 \\ 0\end{pmatrix}}_\A=\lim_{t \to \infty}\threebar{x(t)}_\A \leq \threebar{x(0)}_\A =  \threebar{\begin{pmatrix}u_1 \\ u_2\end{pmatrix}}_\A=  \threebar{\begin{pmatrix}u_1 \\ 0\end{pmatrix}}_\A.\]
Here we have used the fact that $x(t)$ is a trajectory of the linear switching system defined by $\A$, the fact that $\threebar{\cdot}_\A$ is an extremal norm for $\A$, and \eqref{eq:stonks}. Thus we have
\[\lim_{n  \to \infty} z(t_n) = \gamma\cdot \begin{pmatrix} u_1 \\ 0\end{pmatrix} \otimes \begin{pmatrix} v_1 \\ v_2\end{pmatrix} \]
with $|\gamma|=|\gamma_1\gamma_2|\leq 1$, so that
\[\threebar{\begin{pmatrix} u_1 \\ u_2\end{pmatrix} \otimes \begin{pmatrix} v_1 \\ v_2\end{pmatrix}}  =\threebar{z(0)}=\lim_{n \to \infty} \threebar{z(t_n)}\leq \threebar{\begin{pmatrix} u_1 \\ 0\end{pmatrix} \otimes \begin{pmatrix} v_1 \\ v_2\end{pmatrix}} .\]
We have proved \eqref{eq:bonanza}, and the proof of the theorem is complete.

\bibliographystyle{acm}
\bibliography{pork}
\end{document}